\newtheorem{theorem}{Theorem}[section]
\newtheorem{corollary}[theorem]{Corollary}
\newtheorem{lemma}[theorem]{Lemma}
\newtheorem{proposition}[theorem]{Proposition}
\theoremstyle{definition}
\newcommand{\ep}{\epsilon}
\newcommand{\md}{\mathcal{D}}
\newcommand{\cc}{\mathbb{C}}
\newcommand{\cov}{\mathrm{Cov}}
\newcommand{\ee}{\mathbb{E}}
\newcommand{\mf}{\mathcal{F}}
\newcommand{\ml}{\mathcal{L}}
\newcommand{\pp}{\mathbb{P}}
\newcommand{\ra}{\rightarrow}
\newcommand{\rr}{\mathbb{R}}
\newcommand{\var}{\mathrm{Var}}
\newcommand{\vp}{\varphi^\prime}
\newcommand{\xp}{X^\prime}
\newcommand{\xx}{\mathcal{X}}
\newcommand{\zz}{\mathbb{Z}}
\title[A short survey of Stein's method]{A short survey of Stein's method}
\author[Sourav Chatterjee]
{Sourav Chatterjee\thanks{The author was partially supported by NSF grant DMS-1309618 during the preparation of this article.}}
\begin{document}

\begin{abstract}
Stein's method is a powerful technique for proving central limit theorems in probability theory when more straightforward approaches cannot be implemented easily. This article begins with a survey of the historical development of Stein's method and some recent advances. This is followed by a description of a ``general purpose'' variant of Stein's method that may be called the generalized perturbative   approach, and an application of this method to minimal spanning trees.  The article concludes with the descriptions   of some well known open problems that may possibly be solved by the perturbative approach or some other variant of Stein's method.
\end{abstract}

\begin{classification}
Primary 60F05; Secondary 60B10.
\end{classification}

\begin{keywords}
Stein's method, normal approximation, central limit theorem.
\end{keywords}

\maketitle

%\tableofcontents

\section{Introduction}
A sequence of real-valued random variables $Z_n$ is said to converge in distribution to a limiting random variable $Z$ if 
\[
\lim_{n\ra\infty} \pp(Z_n\le t) = \pp(Z\le t)
\]
at all $t$ where the map $t\mapsto \pp(Z\le t)$ is continuous. It is equivalent to saying that 
for all bounded  continuous functions $g$ from $\rr$ into $\rr$ (or into $\cc$), 
\begin{equation}\label{conveq}
\lim_{n\to \infty} \ee g(Z_n) = \ee g(Z)\, .
\end{equation}
Often, it is not necessary to consider all bounded continuous $g$, but only $g$ belonging to a smaller class. For example, it suffices to consider all $g$ of the form $g(x)=e^{itx}$, where $i=\sqrt{-1}$ and $t\in \rr$ is arbitrary, leading to the method of characteristic functions (that is, Fourier transforms) for proving convergence in distribution. 

The case where $Z$ is a normal (alternatively, Gaussian) random variable is of particular interest to probabilists and statisticians, because of the frequency of its appearance as a limit in numerous problems. The normal distribution with mean $\mu$ and variance $\sigma$ is the probability distribution on $\rr$ that has probability density
\[
\frac{1}{\sigma\sqrt{2\pi}}e^{-(x-\mu)^2/2\sigma^2}
\] 
with respect to Lebesgue measure. The case $\mu=0$ and $\sigma=1$ is called ``standard normal'' or ``standard Gaussian''. To show that a sequence of random variables $Z_n$ converges in distribution to this $Z$, one simply has to show that for each $t$,
\[
\lim_{n\ra\rr} \ee(e^{itZ_n}) = \ee(e^{itZ}) = e^{it\mu - \sigma^2 t^2/2}\, .
\]
Indeed, this is the most well known approach to proving the classical central limit theorem for sums of independent random variables. 

Besides characteristic functions, there are two other classical approaches to proving central limit theorems. First, there is the method of moments, which involves showing that $\lim_{n\ra\infty}\ee(Z_n^k) = \ee(Z^k)$ for every positive integer $k$. %The right-hand side is known as the $k$th moment of $Z$. %When $Z$ is normal, the moments of $Z$ are known explicitly: the odd moments are zero, and even moments are given by the formula $k!! = (k-1)(k-3)\cdots 5\cdot 3\cdot 1$. 
%Note, however,  that convergence of moments is not a necessary condition for convergence in distribution; $Z_n$ may converge in distribution to $Z$ even if the moments do not converge. 
Second, there is an old technique of Lindeberg \cite{lindeberg22}, which  has recently regained prominence. I will explain Lindeberg's method in Section \ref{open}. 

In 1972, Charles Stein \cite{stein72} proposed a radically different approach to proving convergence to normality. Stein's observation was that the standard normal distribution is the only probability distribution that satisfies the equation
\begin{equation}\label{stein}
\ee (Zf(Z)) = \ee f'(Z)
\end{equation}
for all absolutely continuous $f$ with a.e.~derivative $f'$ such that $\ee|f'(Z)|<\infty$. From this, one might expect that if $W$ is a random variable that satisfies the above equation in an approximate sense, then the distribution of $W$ should be close to the standard normal distribution. Stein's approach to making this idea precise was as follows. 

Take any bounded measurable function $g:\rr\ra\rr$. Let $f$ be a bounded solution of the differential equation
\begin{equation}\label{steineq}
f'(x)-xf(x) = g(x) - \ee g(Z)\, ,
\end{equation}
where $Z$ is a standard normal random variable. Stein \cite{stein72} showed that a bounded solution always exists, and therefore for any random variable $W$,
\[
\ee g(W) - \ee g(Z) = \ee (f'(W) - Wf(W))\, .
\]
If the right-hand side is close to zero, so is the left. If we want to consider the supremum of the left-hand side over a class of functions $g$, then it suffices to do the same on the right for all $f$ obtained from such $g$. For example, one can prove the following simple proposition: 
\begin{proposition}\label{mainprop}
Let $\md$ be the set of all $f:\rr \to \rr$ that are twice continuously differentiable, and $|f(x)|\le 1$, $|f'(x)|\le1$ and $|f''(x)|\le 1$ for all $x\in \rr$. Let $Z$ be a standard normal random variable and $W$ be any random variable. Then
\[
\sup_{t\in \rr} |\pp(W\le t) - \pp(Z\le t)|\le 2\Big(\sup_{f\in \md} |\ee(f'(W)-W f(W))|\Big)^{1/2}\, .
\]
\end{proposition}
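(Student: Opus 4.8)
\emph{Proof strategy.} Fix $t\in\rr$ and write $\Phi(t)=\pp(Z\le t)$; the plan is to bound $\pp(W\le t)-\Phi(t)$ and $\Phi(t)-\pp(W\le t)$ separately and then take the supremum over $t$. One would like to take $g=\mathbf 1_{(-\infty,t]}$ in Stein's equation~\eqref{steineq}, but the resulting solution $f$ has a corner at $t$, so it is not twice continuously differentiable and hence not a member of $\md$. The fix is to smooth the indicator first, apply Stein's identity to the smoothed function, and then control the smoothing error.

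Concretely, fix small parameters $\varepsilon>0$ and $\eta>0$ and pick a nonincreasing $C^1$ function $g_\varepsilon\colon\rr\to[0,1]$ that equals $1$ on $(-\infty,t]$, equals $0$ on $[t+\varepsilon,\infty)$, and satisfies $\|g_\varepsilon'\|_\infty\le(1+\eta)/\varepsilon$ (such a ramp exists, since $-g_\varepsilon'$ must average $1/\varepsilon$ over an interval of length $\varepsilon$ and can be taken continuous with peak only slightly above $1/\varepsilon$). Since $\mathbf 1_{x\le t}\le g_\varepsilon(x)\le\mathbf 1_{x\le t+\varepsilon}$ and the standard normal density is at most $1/\sqrt{2\pi}$, we get
\[
\pp(W\le t)-\Phi(t)\le\bigl(\ee g_\varepsilon(W)-\ee g_\varepsilon(Z)\bigr)+\frac{\varepsilon}{\sqrt{2\pi}}\,.
\]
Now let $f$ solve $f'(x)-xf(x)=g_\varepsilon(x)-\ee g_\varepsilon(Z)$. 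I would invoke the standard quantitative estimates on this solution (going back to \cite{stein72} and its refinements): $f$ is $C^2$ with $\|f\|_\infty\le\sqrt{\pi/2}$, $\|f'\|_\infty\le 2$, and $\|f''\|_\infty\le 2\|g_\varepsilon'\|_\infty\le 2(1+\eta)/\varepsilon$. For $\varepsilon$ small the last bound dominates, so with $M:=\max(\|f\|_\infty,\|f'\|_\infty,\|f''\|_\infty)\le 2(1+\eta)/\varepsilon$ the function $M^{-1}f$ lies in $\md$. Taking expectations in the Stein equation (legitimate because $f$ and $x\mapsto xf(x)$ are both bounded) gives $\ee g_\varepsilon(W)-\ee g_\varepsilon(Z)=\ee(f'(W)-Wf(W))=M\,\ee\bigl(M^{-1}f'(W)-WM^{-1}f(W)\bigr)\le M\delta$, where $\delta:=\sup_{f\in\md}|\ee(f'(W)-Wf(W))|$. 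Combining with the previous display, $\pp(W\le t)-\Phi(t)\le 2(1+\eta)\delta/\varepsilon+\varepsilon/\sqrt{2\pi}$.

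Finally, I would choose $\varepsilon$ to (nearly) minimize the right-hand side — the optimal $\varepsilon$ is of order $\sqrt\delta$ — and use that $2/\sqrt{2\pi}<1$ to absorb the factor $1+\eta$, yielding $\pp(W\le t)-\Phi(t)\le 2\sqrt\delta$ (the estimate being vacuous anyway once $\delta\ge 1/4$, since the left side never exceeds $1$). The opposite inequality $\Phi(t)-\pp(W\le t)\le 2\sqrt\delta$ follows identically, now bounding the indicator from below by a $C^1$ ramp that is $1$ on $(-\infty,t-\varepsilon]$ and $0$ on $[t,\infty)$. Taking the supremum over $t$ completes the proof. There is no deep obstacle here — which is why the paper calls the proposition ``simple'' — but the step most worth care is pinning down the constant $2$: this relies on the sharp estimates $\|f'\|_\infty\le 2$ and $\|f''\|_\infty\le 2\|g_\varepsilon'\|_\infty$ for the Stein solution, together with the freedom to take $\|g_\varepsilon'\|_\infty$ arbitrarily close to $1/\varepsilon$, so that the trade-off between the smoothing error ($\propto\varepsilon$) and the Stein error ($\propto\delta/\varepsilon$) balances to exactly $2\sqrt\delta$.
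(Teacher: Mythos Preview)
Your proof is correct and follows essentially the same route as the paper: approximate $\mathbf 1_{(-\infty,t]}$ by a ramp of width $\varepsilon$, solve Stein's equation for the ramp, rescale the solution into $\md$ via the standard derivative bounds, and optimize $\varepsilon/\sqrt{2\pi}+(2/\varepsilon)\delta$ to land on $2\sqrt\delta$. The only difference is cosmetic: the paper uses the \emph{piecewise linear} ramp (so $\|g'\|_\infty=1/\varepsilon$ exactly) and quotes $\|f\|_\infty,\|f'\|_\infty,\|f''\|_\infty\le 2/\varepsilon$ from \cite{chen}, whereas you insist on a $C^1$ ramp with $\|g'\|_\infty\le(1+\eta)/\varepsilon$ and then let $\eta\downarrow 0$; your version is arguably the more careful one, since with a merely Lipschitz $g$ the Stein solution has $f''$ only piecewise continuous, so strictly speaking $(\varepsilon/2)f\notin\md$ under the paper's $C^2$ definition.
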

\begin{proof}
Fix $\ep > 0$. Let $g(x) = 1$ if $x\le t$ and $0$ if $x\ge t+\ep$, with linear interpolation in the interval $[t,t+\ep]$. Let $f$ be a solution of the differential equation  \eqref{steineq}. By standard estimates \cite[Lemma 2.4]{chen}, $|f(x)|\le 2/\ep$, $|f'(x)|\le \sqrt{2/\pi}/ \ep$ and $|f''(x)|\le 2/\ep$ for all $x$. Consequently, $(\ep /2)f\in \md$.  Since the probability density function of $Z$ is bounded by $1/\sqrt{2\pi}$ everywhere, it follows that
\begin{align*}
\pp(W\le t) &\le \ee g(W) \\
&= \ee g(Z) + \ee(f'(W)-Wf(W))\\
&\le \pp(Z\le t) + \frac{\ep}{\sqrt{2\pi}} + \ee(f'(W)-Wf(W))\\
&\le  \pp(Z\le t) + \frac{\ep}{\sqrt{2\pi}} + \frac{2}{\ep}\sup_{h\in \md} \ee(h'(W)-Wh(W))\, .
\end{align*}
Similarly, taking $g(x)=1$ if $x\le t-\ep$, $g(x)=0$ if $x\ge t$ and linear interpolation in the interval $[t-\ep, t]$, we get 
\begin{align*}
\pp(W\le t) &\ge  \pp(Z\le t) - \frac{\ep}{\sqrt{2\pi}} - \frac{2}{\ep}\sup_{h\in \md} |\ee(h'(W)-Wh(W))|\, .
\end{align*}
The proof of the proposition is now easily completed  by optimizing over $\ep$. 
\end{proof}
The convenience of dealing with the right-hand side in Proposition \ref{mainprop} is that it involves only one random variable, $W$, instead of the two variables $W$ and $Z$ that occur on the left. This simple yet profound idea  gave birth to the field of Stein's method, that has survived the test of time and is still alive as an active field of research within probability theory after forty years of its inception. 

\section{A brief history of Stein's method}
Stein introduced his method of normal approximation in the seminal paper \cite{stein72} in 1972. The key to Stein's implementation of his idea was the method of exchangeable pairs, devised by Stein in \cite{stein72}. The key idea is as follows. A pair of random variables or vectors $(W,W')$ is called an {\it exchangeable pair} if $(W,W')$ has the same distribution as $(W', W)$. Stein's basic idea was that if $(W,W')$ is an exchangeable pair such that for some small number $\lambda$,
\begin{align*}
&\ee(W'-W\mid W) = -\lambda W  + o(\lambda)\, ,\\
&\ee((W'-W)^2\mid W) = 2\lambda + o(\lambda)\, , \text{ and }\\
&\ee|W'-W|^3 = o(\lambda)\, ,
\end{align*}
where $o(\lambda)$ denotes random or nonrandom quantities that have typical magnitude much smaller than $\lambda$, then $X$ is approximately standard normal. Without going into the precise details, Stein's reasoning goes like this: Given any $f\in \md$ where $\md$ is the function class from Proposition \ref{mainprop}, it follows by exchangeability that
\begin{align*}
\ee((W'-W)(f(W')+f(W))) = 0\, ,
\end{align*}
because the left-hand side is unchanged if $W$ and $W'$ are exchanged, but it also becomes the negation of itself. But note that by the given conditions,
\begin{align*}
\frac{1}{2\lambda} \ee((W'-W)(f(W')+f(W))) &= \frac{1}{2\lambda} \ee((W'-W)(f(W')-f(W)))\\
&\qquad  + \frac{1}{\lambda} \ee((W'-W)f(W))\\
&= \frac{1}{2\lambda} \ee((W'-W)^2 f'(W)) - \ee(Wf(W)) + o(1)\\
&= \ee(f'(W)) - \ee(Wf(W)) + o(1)\, ,
\end{align*}
where $o(1)$ denotes a small quantity. 

For example, if $W = n^{-1/2}(X_1+\cdots+X_n)$ for i.i.d.\ random variables $X_1,\ldots,X_n$ with mean zero, variance one and $\ee|X_1|^3 <\infty$, then taking 
\[
W' = W - \frac{X_I}{\sqrt{n}} + \frac{X_I'}{\sqrt{n}}\, ,
\]
where $I$ is uniformly chosen from $\{1,\ldots,n\}$ and for each $i$, $X_i'$ is an independent random variable having the same distribution as $X_i$, we get an exchangeable pair that satisfies the three criteria listed above with $\lambda = 1/n$ (easy to check).

The monograph \cite{stein86} also contains the following abstract generalization of the above idea. Suppose that we have two random variables $W$ and $Z$, and suppose that $T_0$ is an operator on the space of bounded measurable functions such that $\ee T_0f(Z) = 0$ for all $f$. Let $\alpha$ be any  map that takes a bounded measurable function $f$ on $\rr$ to an antisymmetric bounded measurable function $\alpha f$ on $\rr^2$ (meaning that $\alpha f (x,y)=-\alpha f(y,x)$ for all $x,y$). %For instance, $\alpha f$ can be defined as $\alpha f(x,y) = f(x)-f(y)$, or as $\alpha f (x,y). 

In the above setting, note that if $W'$ is a random variable such that $(W,W')$ is an exchangeable pair, then $\ee\alpha f(W, W') = 0$ for any $f$. For a function $h$ of two variables, let 
\[
Th(x) := \ee(h(W,W')\mid  W=x)\, ,
\]
so that $\ee T\alpha f(W) = \ee\alpha f(W,W') = 0$ for any $f$. Consequently, given $g$, if $f$ is a solution of the functional equation 
\[
T_0f(x) = g(x) - \ee g(Z)\, ,
\]
then
\begin{equation}\label{steinid}
\ee g(W) - \ee g(Z) = \ee T_0 f(W) = \ee (T_0 f(W) - T\alpha f(W))\, .
\end{equation}
Thus, if $T_0 \approx T\alpha$, then $Z$ and $W$ have approximately the same distributions. For example, for normal approximation, we can take $T_0f(x) = f'(x)-xf(x)$ and $\alpha f(x,y)=(2\lambda)^{-1}(x-y)(f(x)+f(y))$, where $\lambda$ is as above. If the three conditions listed by Stein hold for an exchangeable pair $(W,W')$, then indeed $T_0 \approx T\alpha$, as we have shown above. 

The identity \eqref{steinid} is the content of a famous commutative diagram of Stein \cite{stein86}. It has been used in contexts other than normal approximation --- for example, for Poisson approximation in \cite{cdm} and for the analysis of Markov chains in \cite{diaconis}. 

A notable success story of Stein's method was authored by Bolthausen~\cite{bolthausen} in 1984, when he used a sophisticated version of the method of exchangeable pairs to obtain an error bound in a famous combinatorial central limit theorem of Hoeffding. The problem here is to prove a central limit theorem for an object like $W = \sum_{i=1}^n a_{i\pi(i)}$, where $a_{ij}$ is a given array of real numbers, and $\pi$ is a uniform random permutation of $\{1,\ldots, n\}$. Bolthausen defined
\[
W' = W - a_{I\pi(I)} - a_{J\pi(J)} + a_{I\pi(J)} + a_{J \pi(I)}\, ,
\]
and proved that $(W,W')$ is an exchangeable pair satisfying the three required conditions. The difficult part in Bolthausen's work was to derive a sharp error bound, since the error rate given by a result like Proposition \ref{mainprop} is usually not optimal. 

Incidentally, it has been proved recently by R\"ollin \cite{rollin} that to apply exchangeable pairs for normal approximation, it is actually not necessary that $W$ and $W'$ are exchangeable; one can make an argument go through if $W$ and $W'$ have the same distribution. 

Stein's 1986 monograph \cite{stein86} was the first book-length treatment of Stein's method. After the publication of \cite{stein86}, the field was given a boost by the popularization of the method of dependency graphs by Baldi and Rinott \cite{baldi1}, a striking application to the number of local maxima of random functions by Baldi, Rinott and Stein \cite{baldi2}, and central limit theorems for random graphs by Barbour, Karo\'nski and Ruci\'nski \cite{barbour2}, all in 1989. 

The method of dependency graphs, as a version of Stein's method,  was introduced in Louis Chen's 1971 Ph.D.~thesis on Poisson approximation and the subsequent publication \cite{chen0}. It was developed further by Chen \cite{chen86} before being brought to wider attention by Baldi and Rinott~\cite{baldi1}. Briefly, the method may be described as follows. Suppose that $(X_i)_{i\in V}$ is a collection of random variables indexed by some finite set $V$. A {\it dependency graph} is an undirected graph on the vertex set $V$ such that if $A$ and $B$ are two subsets of $V$ such that there are no edges with one endpoint in  $A$ and the other in $B$, then the collections $(X_i)_{i\in A}$ and $(X_i)_{i\in B}$ are independent. Fix a dependency graph, and for each $i$, let $N_i$ be the neighborhood of $i$ in this graph, including the vertex $i$. Let $W = \sum_{i\in V} X_i$ and assume that $\ee(X_i)=0$ for each $i$. Define
\[
W_i := \sum_{j\not\in N_i} X_j\, ,
\]
so that $W_i$ is independent of $X_i$. 
Then note that for any smooth $f$,
\begin{align*}
\ee(W f(W)) &= \sum_{i\in V} \ee(X_i f(W)) \\
&= \sum_{i\in V} \ee(X_i (f(W)-f(W_i)))\\
&\approx \sum_{i\in V} \ee(X_i (W-W_i) f'(W)) =  \ee\biggl(\biggl(\sum_{i\in V} X_i (W-W_i)\biggr) f'(W)\biggr)\, ,
\end{align*}
where the approximation holds under the condition that $W\approx W_i$ for each $i$. Define $T := \sum_{i\in V} X_i(W-W_i)$. Let $\sigma^2 := \ee T$. The above approximation, when valid, implies that $\var W = \ee W^2 \approx \sigma^2$. Therefore if $T$ has a small variance, then $\ee (Wf(W)) \approx \sigma^2 \ee f'(W)$. By a slight variant of Proposition \ref{mainprop}, this shows that $W$ is approximately normal with mean zero and variance $\sigma^2$. 

To gain a hands-on understanding of the dependency graph method,  the reader can check that this technique works when $Y_1,\ldots, Y_n$ are independent random variables with mean zero, and $X_i = n^{-1/2} Y_i Y_{i+1}$ for $i=1,\ldots, n-1$. Here $V= \{1,\ldots, n-1\}$, and a dependency graph may be defined by putting an edge between $i$ and $j$ whenever $|i-j|=1$. 

The new surge of activity that began in the late eighties continued through the nineties, with important contributions coming from Barbour \cite{barbour0} in 1990, who introduced the diffusion approach to Stein's method; Avram and Bertsimas \cite{avram} in 1993, who applied Stein's method to solve an array of important problems in geometric probability; Goldstein and Rinott \cite{goldstein2} in 1996, who developed the method of size-biased couplings for Stein's method, improving on earlier insights of Baldi, Rinott and Stein \cite{baldi2}; Goldstein and Reinert \cite{goldstein1} in 1997, who introduced the method of zero-bias couplings; and Rinott and Rotar \cite{rinott} in 1997, who solved a well known open problem related to the antivoter model using Stein's method. Sometime later, in 2004, Chen and Shao \cite{chen2} did an in-depth study of the dependency graph approach, producing optimal Berry-Ess\'een type error bounds in a wide range of problems. The 2003 monograph of Penrose \cite{penrose} gave extensive applications of the dependency graph approach to problems in geometric probability.

I will now try to outline the basic concepts behind some of the methods cited in the preceding paragraph. 

The central idea behind Barbour's diffusion approach \cite{barbour0} is that if a probability measure $\mu$ on some abstract space is the unique invariant measure for a diffusion process with generator $\ml$, then under mild conditions $\mu$ is the only probability measure satisfying $\int \ml f d\mu = 0$ for all $f$ in the domain of $\ml$; therefore, if a probability measure $\nu$ has the property that $\int \ml fd\nu \approx 0$ in some suitable sense for a large class of $f$'s, then one may expect that $\nu$ is close to $\mu$ is some appropriate metric. Generalizing Stein's original approach, Barbour then proposed the following route to make this idea precise. Given a function $g$ on this abstract space, one can try to solve for
\[
\ml f(x) = g(x) - \int g d\mu\, ,
\]
and use 
\[
\int g d\nu - \int gd\mu = \int \ml f d\nu \approx 0\, .
\]
To see how Stein's method of normal approximation fits into this picture, one needs to recall that the standard normal distribution on $\rr$ is the unique invariant measure for a diffusion process known as the {\it Ornstein-Uhlenbeck process}, whose generator is $\ml f (x) = f''(x)-xf'(x)$. This looks different than the original Stein operator $f'(x)-xf(x)$, but it is essentially the same: one has to simply replace $f$ by $f'$ and $f'$ by $f''$. 

In \cite{barbour0}, Barbour used this variant of Stein's method to solve some problems about diffusion approximation. However, the most significant contribution of Barbour's paper was a clarification of the mysterious nature of the method of exchangeable pairs. A one dimensional diffusion process $(X_t)_{t\ge 0}$ with drift coefficient $a(x)$ and diffusion coefficient $b(x)$ is a continuous time stochastic process adapted to some filtration $\{\mf_t\}_{t\ge 0}$ satisfying, as $h \ra 0$, 
\begin{align*}
&\ee(X_{t+h} - X_t \mid \mf_t)  = a(X_t) h + o(h)\, ,\\
&\ee((X_{t+h}-X_t)^2 \mid \mf_t) = b(X_t)^2 h + o(h)\, , \text{ and}\\
&\ee|X_{t+h}-X_t|^3 = o(h)\, .
\end{align*}
An exchangeable pair $(W, W')$ naturally defines a stationary, reversible Markov chain $W_0,W_1,W_2,\ldots$, where $W_0=W$, $W_1=W'$, and for each $i$, the conditional distribution of $W_{i+1}$ given $W_i$ is the same as that of $W_1$ given $W_0$. If the pair $(W,W')$ satisfies the three conditions listed by Stein for some small $\lambda$, then in a scaling limit as $\lambda \ra 0$, the Markov chain defined above converges to a diffusion process with drift function $a(x)=-x$ and diffusion coefficient $\sqrt{2}$. This is precisely the standard Ornstein-Uhlenbeck process whose stationary distribution is the standard normal. Therefore one can expect that $W$ is approximately normally distributed. Note that this argument is quite general, and not restricted to normal approximation. In a later paragraph, I will briefly point out some  generalizations of Stein's method using Barbour's approach.

The method of size-biased couplings in Stein's method was introduced in the paper of Baldi, Rinott and Stein \cite{baldi2}, and was fully developed by Goldstein and Rinott \cite{goldstein2}. The size-biased transform of a non-negative random variable $W$ with mean $\lambda$ is a random variable, usually denoted by $W^*$, such that for all $g$,
\[
\ee(Wg(W)) = \lambda \ee g(W^*)\,.
\]
Size biasing is actually a map on probability measures, which takes a probability measure $\mu$ on the non-negative reals to a probability  measure $\nu$ defined as $d\nu(x) = \lambda^{-1}xd\mu(x)$, where $\lambda$ is the mean of $\mu$. Size biasing is an old concept, predating Stein's method, probably originating in the survey sampling literature. (Actually, the name ``size-biasing'' comes from the survey sampling procedure where a sample point is chosen with probability proportional to some notion of size.) As a consequence of its classical origins and usefulness in a variety of domains, there are many standard procedures to construct size-biased versions of complicated random variables starting from simpler ones. For example, if $X_1,\ldots, X_n$ are i.i.d.\ non-negative random variables, and $W= X_1+\cdots+X_n$, and $X_1^*$ is a size-biased version of $X_1$, then $W^* = X_1^* +X_2+\cdots +X_n$ is a size-biased version of $W$. To see this, just note that for any $g$,
\begin{align*}
\ee(Wg(W)) &= n \ee(X_1 g(X_1+\cdots X_n)) \\
&= n\ee(X_1)\ee g(X_1^* + X_2+\cdots+X_n)\\
&= \ee(W) \ee g(W^*)\, .
\end{align*}
For more complicated examples, see \cite{goldstein2}.

 In Stein's method, size biasing is used in the following manner: Suppose that $W$ is  a non-negative random variable with mean $\lambda$ and variance $\sigma^2$. Suppose that we are able to construct a size-biased version $W^*$ of $W$ on the same probability space, such that 
\begin{align*}
& \ee(W^*-W \mid W) = \frac{\sigma^2}{\lambda}(1+o(1))\, , \text{ and } \\
& \ee(W^* - W)^2 = o\biggl(\frac{\sigma^{3}}{\lambda}\biggr)\, .
\end{align*}
Then the standardized random variable $X := (W-\lambda)/\sigma$ is approximately standard normal. To understand why this works, let $Y:= (W^*-\lambda)/\sigma$ and note that under the two conditions displayed above,
\begin{align*}
\ee(Xf(X)) &= \frac{1}{\sigma} \ee (W f(X)) - \frac{\lambda}{\sigma} \ee f(X) \\
&= \frac{\lambda}{\sigma} \ee (f(Y)- f(X))\\
&= \frac{\lambda}{\sigma} \ee ((Y-X) f'(X)) + \frac{\lambda}{\sigma} O(\ee(Y-X)^2)\\
&= \frac{\lambda}{\sigma^2} \ee(\ee(W^*-W|W)f'(X)) + \frac{\lambda}{\sigma^3} O(\ee(W^*-W)^2)\\
&= \ee f'(X)  + o(1)\, .
\end{align*}
For a mathematically precise version of the above argument, see \cite[Theorem 1.1]{goldstein2}.

The method of size biased couplings is quite a powerful tool for proving central limit theorems for non-negative random variables, especially those that arise as sums of mildly dependent variables. The only hurdle is that one has to be able to construct a suitable size-biased coupling. There is also the other limitation that $W$ has to be non-negative.  To overcome these limitations, Goldstein and Reinert~\cite{goldstein1} introduced the method of zero-bias couplings. Given a random variable $W$ with mean zero and variance $\sigma^2$, the zero-biased transform $W'$ of $W$ is a random variable satisfying
\[
\ee (W f(W)) = \sigma^2 \ee f'(W')
\]
for all differentiable $f$ whenever the left-hand side is well-defined. It is clear from Proposition \ref{mainprop} that if one can define a zero-bias transform $W'$ on the same probability space as $W$ such that $W'\approx W$ with high probability, then $W$ is approximately normal with mean $0$ and variance $\sigma^2$. The construction of zero-bias transforms can be quite tricky. The method has been systematically developed and used to solve a variety of problems by a number of authors, starting with Goldstein and Reinert \cite{goldstein1}.  

A feature of Stein's method of normal approximation that has limited its applicability throughout the history of the subject is that it works only for problems where ``something nice'' happens. This is true of all classical versions of the method, such as the method of exchangeable pairs, the dependency graph approach, size-biased couplings and  zero-bias couplings. For exchangeable pairs, we need that the three conditions listed by Stein are valid. For dependency graphs, we need the presence of a dependency graph of relatively  small degree. For the coupling techniques, we need to be able to construct the couplings. Given a general problem with no special structure, it is often difficult to make these methods work.  Intending to come up with a more general approach, I introduced a new method in 2008 in the paper \cite{chatterjee3} for discrete systems, and a corresponding continuous version in \cite{chatterjee4} in 2009. This new approach (which I am calling  {\it the generalized perturbative approach} in this article) was used to solve a number of questions in geometric probability in \cite{chatterjee3}, random matrix central limit theorems in~\cite{chatterjee4}, number theoretic central limit theorems in~\cite{chatterjeesound}, and  an error bound in a  central limit theorem for minimal spanning trees in~\cite{chatterjeesen}. The generalized perturbative method is described in detail in Section~\ref{mainresult}.

 The paper \cite{chatterjee4} also introduced the notion of {\it second order Poincar\'e inequalities}. The simplest second order Poincar\'e inequality, derived in \cite{chatterjee4}, states that if $X = (X_1,\ldots, X_n)$ is a vector of i.i.d.\ standard normal random variables, $f:\rr^n \ra \rr$ is a twice continuously differentiable function with gradient $\nabla f$ and Hessian matrix $\mathrm{Hess} f$, and $W := f(X)$ has mean zero and variance $1$, then 
\[
\sup_{A\in \mathcal{B}(\rr)}|\pp(W\in A)-\pp(Z\in A)| \le 2\sqrt{5} (\ee\|\nabla f(X)\|^4)^{1/4} (\ee\|\mathrm{Hess} f(X)\|_{\mathrm{op}}^4)^{1/4}\, ,
\] 
where $\|\nabla f(X)\|$ is the Euclidean norm of $\nabla f(X)$, $\|\mathrm{Hess} f(X)\|_{\mathrm{op}}$ is the operator norm of $\mathrm{Hess} f(X)$, and $\mathcal{B}(\rr)$ is the set of Borel subsets of $\rr$. In \cite{chatterjee4}, this inequality was used to prove new central limit theorems for linear statistics of eigenvalues of random matrices. The name ``second order Poincar\'e inequality'' is inspired from the analogy with the usual Poincar\'e inequality for the normal distribution, which states that $\var f(X) \le \ee\|\nabla f(X)\|^2$ for any absolutely continuous $f$. Although this does not look like anything related to Stein's method, a close inspection of the proof in \cite{chatterjee4} makes it clear that it is in fact an offshoot of Stein's method.

Incidentally, the usual Poincar\'e inequality has also been used to prove central limi theorems, for example by Chen \cite{chen88}, using a characterization of the normal distribution by Borovkov and Utev \cite{borovkov}. 

Second order Poincar\'e inequalities have been useful in several subsequent works, e.g.~in Nourdin, Peccati and Reinert \cite{nourdin}, Nolen \cite{nolen}, etc. Indeed, it may be said that the whole thriving area of Stein's method in Malliavin calculus, pioneered by Nourdin and Peccati \cite{nourdinpeccati}, is an ``abstractification'' of the ideas contained in \cite{chatterjee3} and \cite{chatterjee4}. The new method was later unified with other branches of Stein's method through the concept of {\it Stein couplings} introduced by Chen and R\"ollin \cite{chenrollin}.

Normal approximation is not the only area covered by Stein's method. In 1975, Louis Chen \cite{chen0} devised a version of Stein's method for Poisson approximation, expanding on his 1971 Ph.D.~thesis under Stein. The {\it Chen-Stein method} of Poisson approximation is a very useful tool in its own right, finding applications in many areas of the applied sciences. The main idea is that a Poisson random variable $X$ with mean $\lambda$ is the only kind of random variable satisfying 
\[
\ee (X f(X)) = \lambda \ee f(X+1)
\]
for every $f$, and then proceed from there as usual by developing a suitable version of Proposition \ref{mainprop}. The subject of Poisson approximation by Stein's method took off with the papers of Arratia, Goldstein and Gordon \cite{arratia, arratia2} and the classic text of Barbour, Holst and Janson \cite{barbour1}, all appearing in the period between 1989 and 1992. A relatively recent survey of Poisson approximation by Stein's method is given in my paper \cite{cdm} with Diaconis and Meckes.

Besides normal and Poisson, Stein's method has been used sometimes for other kinds of distributional approximations. One basic idea was already available in Stein's 1986 monograph \cite{stein86}, and a different one in Barbour's paper \cite{barbour0} on the diffusion approach to Stein's method. These ideas  were implemented in various forms by Mann \cite{mann} in 1994 for chi-square approximation, Luk \cite{luk} in 1997 for gamma approximation, Holmes \cite{holmes} in 2004 for birth-and-death chains, and Reinert \cite{reinert0} in 2005 for approximation of general densities. In 2005, Fulman \cite{fulman05} extended the method of exchangeable pairs to study Plancherel measures on symmetric groups. Stein's method for a mixture of two normal distributions, with an application to spin glasses, appeared in my 2010 paper \cite{chatterjee5}, while another non-normal distribution arising at the critical temperature of the Curie-Weiss model of ferromagnets was tackled in my joint paper with Shao \cite{chatterjeeshao} in 2011 and in a paper of Eichelsbacher and L\"owe \cite{eich} in 2010. Several papers on Stein's method for geometric and exponential approximations have appeared in the literature, including an early paper of Pek\"oz \cite{pekoz1} from 1996, a paper of myself with Fulman and R\"ollin \cite{cfr} that appeared in 2011, and papers of Pek\"oz and R\"ollin \cite{pekoz2} and Pek\"oz, R\"ollin and Ross \cite{pekoz3} that appeared in 2011 and 2013 respectively. 

Another area of active research is Stein's method for multivariate normal approximation. Successful implementations were carried out by G\"otze \cite{gotze} in 1991, Bolthausen and G\"otze \cite{bolthausen2} in 1993, and Rinott and Rotar \cite{rinottrotar96} in 1996. The complexities of G\"otze's method were clarified by Bhattacharya and Holmes \cite{bh} in 2010. In a joint paper \cite{cm} with Meckes in 2008, we found a way to implement the method of exchangeable pairs in the multivariate setting. The main idea here is to generalize Barbour's diffusion approach to the multidimensional setting, by considering the multivariate Ornstein-Uhlenbeck process and the related semigroup. This naturally suggests a multivariate generalization of the three exchangeable pair conditions listed by Stein. The relevant generalization of the Stein equation \eqref{steineq}, therefore, is
\[
\Delta f(x) - x\cdot \nabla f(x) = g(x) - \ee g(Z)\, ,
\]
where $\Delta f$ is the Laplacian of $f$, $\nabla f$ is the gradient of $f$, $x\cdot \nabla f(x)$ is the inner product of the vector $x$ and the gradient vector $\nabla f (x)$, and $Z$ is a multidimensional standard normal random vector. The method was greatly advanced, with many applications, by Reinert and R\"ollin \cite{reinert, reinert2} in 2009 and 2010. Further advances were made in the recent manuscript of R\"ollin \cite{rollin2}.

Incidentally, there is a rich classical area of multivariate normal approximation, and a lot of energy spent on what class of sets the approximation holds for. This remains to be worked out for Stein's method.

Besides distributional approximations, Stein's method has also been used to prove concentration inequalities. Preliminary attempts towards deviation inequalities were made by Stein in his 1986 monograph \cite{stein86}, which were somewhat taken forward by Rai\v{c} in 2007. The first widely applicable set of concentration inequalities using Stein's method of exchangeable pairs appeared in my Ph.D.~thesis \cite{chatterjee1} in 2005, some of which were collected together in the 2007 paper \cite{chatterjee2}. A more complex set of examples was worked out in a  later paper with Dey \cite{chatterjeedey} in 2010. One of the main results of \cite{chatterjee1, chatterjee2} is that if $(W,W')$ is an exchangeable pair of random variables and $F(W,W')$ is an antisymmetric function of $(W,W')$ (meaning that $F(W,W')=-F(W',W)$), then for all $t\ge 0$,
\[
\pp(|f(W)|\ge t) \le 2e^{-t^2/2C}\, ,
\]
where $f(W)=\ee(F(W,W')|W)$ and $C$ is a number such that  
\[
|(f(W)-f(W'))F(W,W')|\le C \ \ \text{with probability one.}
\]
Surprisingly, this abstract machinery has found quite a bit of use in real applications. In 2012, Mackey and coauthors \cite{mackey} extended the method to the domain of matrix concentration inequalities, thereby solving some problems in theoretical machine learning. In 2011, Ghosh and Goldstein \cite{ghosh1, ghosh2} figured out a way to use size-biased couplings for concentration inequalities.

There are a number of nonstandard applications of Stein's method that have not yet gathered a lot of follow up action, for example, Edgeworth expansions (Rinott and Rotar \cite{rr}), rates of convergence of Markov chains (Diaconis \cite{diaconis}), strong approximation in the style of the KMT embedding theorem (my paper \cite{chatterjee6}), moderate deviations (Chen et al.~\cite{chenfang}) and even in the analysis of simulations (Stein et al.~\cite{steinetal}). A great deal of hard work has gone into proving sharp Berry-Ess\'een bounds using Stein's method. Some of this literature is surveyed in Chen and Shao~\cite{chen2}.

A number of well written monographs dedicated to various aspects of Stein's method are in existence. The book of Barbour, Holst and Janson \cite{barbour1} is a classic text on Poisson approximation by Stein's method. The recent monograph by Chen, Goldstein and Shao \cite{chen} is a very readable and comprehensive account of normal approximation by Stein's method. The survey of Ross \cite{ross}, covering many aspects of Stein's method, is already attaining the status of a must-read in this area. The monograph \cite{npbook} of Nourdin and Peccati describes the applications of Stein's method in Malliavin calculus. The edited volumes \cite{barbourchen} and \cite{diaconisholmes} are also worth a look. 

Lastly, I should clarify that the above review was an attempt to cover only the theoretical advances in Stein's method. The method has found many applications in statistics, engineering, machine learning, and other areas of applications of mathematics. I have made no attempt to survey these applications.

This concludes my very rapid survey of existing techniques and ideas in Stein's method. I apologize to anyone whose work I may have inadvertently left out. In the rest of this manuscript, I will attempt to briefly explain the generalized perturbative method introduced in the papers \cite{chatterjee3} and~\cite{chatterjee4}, and then conclude by stating some open problems.

\section{The generalized perturbative approach}\label{mainresult}
Let $\xx$ be a measure space and suppose $X= (X_1,\ldots,X_n)$ is a vector of independent $\xx$-valued random variables. Let $f:\xx^n \ra \rr$ be a measurable function and let $W := f(X)$. Suppose that $\ee W = 0$ and $\ee W^2 = 1$. I will now outline a general technique for getting an upper bound on the distance of $W$ from the standard normal distribution using information about how $f$ changes when one coordinate of $X$ is perturbed. Such techniques have long been commonplace in the field of concentration inequalities. Suitable versions were introduced for the first time in the context of normal approximation in the papers \cite{chatterjee3, chatterjee4}. I am now calling this the {\it generalized perturbative approach} to Stein's method. The word ``generalized'' is added to the name because the method of exchangeable pairs is also a perturbative approach, but this is more general. 

Let $\xp = (\xp_1,\ldots,\xp_n)$ be an independent copy of $X$. Let $[n] = \{1,\ldots, n\}$, and for each $A \subseteq [n]$, define the random vector $X^A$ as
\[
X^A_i =
\begin{cases}
\xp_i &\text{ if } i\in A,\\
X_i &\text{ if } i\not \in A.
\end{cases}
\]
When $A$ is singleton set like $\{i\}$, write $X^i$ instead of $X^{\{i\}}$. Similarly, write $A\cup i$ instead of $A\cup \{i\}$.  Define a randomized derivative of $f$ along the $i$th coordinate~as
\[
\Delta_i f := f(X)-f(X^{i})\, ,
\]
and for each $A\subseteq [n]$ and $i\not\in A$, let
\[
\Delta_i f^A := f(X^A) - f(X^{A\cup i})\,.
\]
For each proper subset $A$ of $[n]$ define
\[
\nu(A) := \frac{1}{n{n-1\choose |A|}}\, .
\]
Note that when restricted to the set of all subsets of $[n]\backslash\{i\}$ for some given $i$, $\nu$ is a probability measure. Define
\[
T := \frac{1}{2}\sum_{i=1}^n\sum_{A\subseteq [n]\backslash\{i\}} \nu(A) \Delta_i f \Delta_i f^A\, .
\]
The generalized perturbative approach is based on the following completely general upper bound on the distance of $W$ from normality using the properties of the discrete derivatives $\Delta_i f$ and $\Delta_i f^A$. %This completely general bound is derived in the next few pages. 
\begin{theorem}[Variant of Theorem 2.2 in \cite{chatterjee3}]\label{mainthm}
Let $W$ be as above and $Z$ be a standard normal random variable. Then 
\[
\sup_{t\in \rr}|\pp(W\le t) - \pp(Z\le t)|\le 2\biggl(\sqrt{\var (\ee(T|W))} + \frac{1}{4}\sum_{i=1}^n \ee|\Delta_i f|^3\biggr)^{1/2}\, .
\]
\end{theorem}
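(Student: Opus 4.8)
The plan is to reduce, via Proposition~\ref{mainprop}, to controlling $\ee(h'(W)-Wh(W))$ for $h\in\md$, and then to prove an exact ``interpolation identity'' for covariances under coordinate resampling. Throughout I write $h$ for the test function of Proposition~\ref{mainprop}, to avoid clashing with the map $f$. The technical core is the claim that for every $g\colon\xx^n\ra\rr$ with $\ee g(X)^2<\infty$ (extending the notation $\Delta_i g^A:=g(X^A)-g(X^{A\cup i})$ to an arbitrary $g$),
\[
\cov\bigl(f(X),g(X)\bigr)=\frac12\sum_{i=1}^n\sum_{A\subseteq[n]\setminus\{i\}}\nu(A)\,\ee\bigl(\Delta_i f\cdot\Delta_i g^A\bigr),
\]
which I would establish in three moves. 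Since $X$ and $\xp$ are independent, $\cov(f(X),g(X))=\ee\bigl(f(X)(g(X)-g(\xp))\bigr)$. Next, pick a uniformly random ordering $\pi$ of $[n]$ and change the coordinates of $X$ to their $\xp$-copies one at a time in that order; telescoping gives $g(X)-g(\xp)=g(X^\emptyset)-g(X^{[n]})=\sum_{k=1}^n\Delta_{\pi(k)}g^{A_{k-1}}$ with $A_{k-1}=\{\pi(1),\dots,\pi(k-1)\}$, and averaging over $\pi$ one checks that coordinate $i$ gets changed immediately after exactly the set $A$ (for $i\notin A$) with probability $|A|!\,(n-1-|A|)!/n!=\nu(A)$, so that $\cov(f,g)=\sum_i\sum_A\nu(A)\,\ee\bigl(f(X)\Delta_i g^A\bigr)$. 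Finally, write $f(X)=\frac12(f(X)+f(X^i))+\frac12\Delta_i f$ and note that $\Delta_i g^A$ changes sign under the swap $X_i\leftrightarrow\xp_i$ while $\frac12(f(X)+f(X^i))$ is invariant under it, so the first piece integrates to zero and $\ee(f(X)\Delta_i g^A)=\frac12\ee(\Delta_i f\,\Delta_i g^A)$. Taking $g=f$ here yields the bookkeeping identity $\ee T=\var(f(X))=\ee W^2=1$.

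With this in hand I would apply the identity to $g=h\circ f$, so $\Delta_i g^A=h(f(X^A))-h(f(X^{A\cup i}))$, and Taylor-expand $h$ to first order about $f(X^A)$:
\[
h(f(X^A))-h(f(X^{A\cup i}))=h'(f(X^A))\,\Delta_i f^A+r_{i,A},\qquad |r_{i,A}|\le\frac12(\Delta_i f^A)^2.
\]
Two observations then close the argument. (i) Relabelling $X_j\leftrightarrow\xp_j$ for all $j\in A$ carries $(X,X^i,X^A,X^{A\cup i})$ to $(X^A,X^{A\cup i},X,X^i)$ without altering the joint law; hence $\ee\bigl(\Delta_i f\cdot h'(f(X^A))\cdot\Delta_i f^A\bigr)=\ee\bigl(h'(W)\,\Delta_i f\,\Delta_i f^A\bigr)$, and summing with the weights $\frac12\nu(A)$ turns the main term into exactly $\ee(T\,h'(W))$. (ii) The same relabelling gives $\Delta_i f^A\stackrel{d}{=}\Delta_i f$, so using $|\Delta_i f|(\Delta_i f^A)^2\le\frac13|\Delta_i f|^3+\frac23|\Delta_i f^A|^3$ and $\sum_{A\subseteq[n]\setminus\{i\}}\nu(A)=1$, the total contribution of the remainders is at most $\frac14\sum_i\ee|\Delta_i f|^3$ in absolute value. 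Therefore $\ee(Wh(W))=\ee(T\,h'(W))+E$ with $|E|\le\frac14\sum_i\ee|\Delta_i f|^3$, so $\ee(h'(W)-Wh(W))=\ee\bigl((1-\ee(T\mid W))h'(W)\bigr)-E$; since $|h'|\le1$ and $\ee(1-\ee(T\mid W))=1-\ee T=0$, Cauchy--Schwarz bounds the first term by $\ee|1-\ee(T\mid W)|\le\sqrt{\var(\ee(T\mid W))}$, and Proposition~\ref{mainprop} then yields the theorem.

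I expect the interpolation identity to be the one genuinely substantial step: one must verify that the random-ordering argument produces exactly the weights $\nu(A)$ and that the $i$-th-coordinate antisymmetrization converts the bare $f(X)$ into $\frac12\Delta_i f$. The remaining work is routine in spirit, but the bookkeeping is delicate in one respect --- the error has to come out purely in terms of the single-coordinate derivatives $\Delta_i f$, and the reason no multi-coordinate difference $f(X^A)-f(X)$ survives is precisely the resampling symmetry $\Delta_i f^A\stackrel{d}{=}\Delta_i f$ used in observation (ii).
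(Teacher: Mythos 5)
Your proof is correct and follows essentially the same route as the paper's: the exact resampling identity $\ee(g(X)W)=\tfrac12\sum_i\sum_A\nu(A)\,\ee(\Delta_i g\,\Delta_i f^A)$ obtained by telescoping plus antisymmetrization in the $i$th coordinate, Taylor expansion of the test function to extract $\ee(T\,h'(W))$ with a cubic remainder, the Cauchy--Schwarz bound $\ee|\ee(T|W)-1|\le\sqrt{\var(\ee(T|W))}$, and Proposition~\ref{mainprop}. The only (valid) cosmetic differences are that you place the superscript $A$ on $g$ rather than on $f$, which forces the extra resampling-symmetry step converting $h'(f(X^A))$ to $h'(W)$ (the paper expands $\Delta_i g$ about $W$ directly and avoids this), and that you control the remainder by Young's inequality plus equidistribution of $\Delta_i f^A$ and $\Delta_i f$ where the paper uses H\"older, both yielding the constant $\tfrac14$.
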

In practice, the variance of $\ee(T|W)$ may be upper bounded by the variance of  $\ee(T|X)$ or the variance of $T$, which are easier to handle mathematically. 

The following simple corollary may often be useful for problems with local dependence. We will see an application of this to minimal spanning trees in Section~\ref{mstsec}. 
\begin{corollary}\label{maincor}
Consider the setting of Theorem \ref{mainthm}. For each $i,j$, let $c_{ij}$ be a constant such that for all $A\subseteq [n]\backslash\{i\}$ and $B\subseteq [n]\backslash\{j\}$,
\[
\cov(\Delta_i f \Delta_i f^A,\,  \Delta_j f \Delta_j f^B) \le c_{ij}\, .
\]
Then 
\[
\sup_{t\in \rr}|\pp(W\le t) - \pp(Z\le t)|\le \sqrt{2}\biggl(\sum_{i,j=1}^n c_{ij}\biggr)^{1/4} + \biggl(\sum_{i=1}^n \ee|\Delta_i f|^3\biggr)^{1/2}\, .
\]
\end{corollary}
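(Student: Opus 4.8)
The plan is to derive the corollary directly from Theorem~\ref{mainthm}: the only work is to bound $\var(\ee(T\mid W))$ in terms of the $c_{ij}$, after which a single application of subadditivity of the square root produces the stated inequality.

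First I would reduce to bounding $\var(T)$ itself. By the law of total variance (equivalently, Jensen's inequality for conditional expectation), $\var(\ee(T\mid W)) \le \var(T)$, so it suffices to control $\var(T)$. Write $T = \tfrac12\sum_{i=1}^n Y_i$ where $Y_i := \sum_{A\subseteq[n]\setminus\{i\}}\nu(A)\,\Delta_i f\,\Delta_i f^A$. Then bilinearity of covariance gives
\[
\var(T) = \frac14\sum_{i,j=1}^n \cov(Y_i,Y_j) = \frac14\sum_{i,j=1}^n\ \sum_{\substack{A\subseteq[n]\setminus\{i\}\\ B\subseteq[n]\setminus\{j\}}} \nu(A)\,\nu(B)\,\cov\!\big(\Delta_i f\,\Delta_i f^A,\ \Delta_j f\,\Delta_j f^B\big)\, .
\]
Now I invoke the hypothesis: each inner covariance is at most $c_{ij}$, and, as noted in the text, $\nu$ restricted to the subsets of $[n]\setminus\{i\}$ is a probability measure, so $\sum_A\nu(A) = \sum_B\nu(B) = 1$. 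Since the weights $\nu(A)\nu(B)$ are nonnegative, the double inner sum is at most $c_{ij}\big(\sum_A\nu(A)\big)\big(\sum_B\nu(B)\big) = c_{ij}$, whence $\var(T) \le \tfrac14\sum_{i,j}c_{ij}$ and therefore $\sqrt{\var(\ee(T\mid W))} \le \tfrac12\big(\sum_{i,j}c_{ij}\big)^{1/2}$.

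Substituting this into Theorem~\ref{mainthm} gives
\[
\sup_{t\in\rr}|\pp(W\le t)-\pp(Z\le t)| \le 2\Big(\tfrac12\big(\textstyle\sum_{i,j}c_{ij}\big)^{1/2} + \tfrac14\textstyle\sum_{i}\ee|\Delta_i f|^3\Big)^{1/2}\, ,
\]
and finally the elementary inequality $\sqrt{a+b}\le\sqrt a+\sqrt b$ together with pulling out the constants yields $2\cdot\tfrac{1}{\sqrt2}\big(\sum_{i,j}c_{ij}\big)^{1/4} + 2\cdot\tfrac12\big(\sum_i\ee|\Delta_i f|^3\big)^{1/2}$, which is exactly the claimed bound. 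There is no real obstacle here — the corollary is essentially a bookkeeping consequence of Theorem~\ref{mainthm}. The only points needing (minor) care are the reduction $\var(\ee(T\mid W))\le\var(T)$, the observation that the $\nu$-weights sum to one so that averaging over $A$ and $B$ does not inflate the covariance bound, and the use of $\sqrt{a+b}\le\sqrt a+\sqrt b$ at the end.
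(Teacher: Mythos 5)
Your proof is correct and follows essentially the same route as the paper's: bound $\var(\ee(T\mid W))$ by $\var(T)$, expand $\var(T)$ via bilinearity of covariance, use that the $\nu$-weights sum to one to get $\var(T)\le\tfrac14\sum_{i,j}c_{ij}$, and finish with Theorem~\ref{mainthm} and $\sqrt{a+b}\le\sqrt{a}+\sqrt{b}$. The final arithmetic producing the constants $\sqrt{2}$ and $1$ also checks out.
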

Intuitively, the above corollary says that if most pairs of discrete derivatives are approximately independent, then $W$ is approximately normal. This condition may be called {\it the approximate independence of small perturbations.} 

For example, if $X_1,\ldots, X_n$ are real-valued with mean zero and variance one, and $W= n^{-1/2}\sum X_i$, then we may take  $c_{ij} = 0$ when $i\ne j$ and $c_{ii} = C/n^2$ for some constant $C$ depending on the distribution of the $X_i$'s. Moreover note that $|\Delta_i f|$ is of order $n^{-1/2}$. Therefore, Corollary \ref{maincor}  gives a proof of the ordinary central limit theorem for sums of i.i.d.~random variables with an $n^{-1/4}$ rate of convergence. This rate is suboptimal, but this suboptimality is a general feature Stein's method, requiring quite a bit of effort to overcome. 

%To understand what happens in a more complicated example, consider a set of $n$ points independently and uniformly distributed in the unit square in $\rr^2$, and consider the sum of nearest neighbor distances for these points. If one point is replaced by an independent copy, the total change that occurs in the sum of nearest neighbor distances 

Theorem \ref{mainthm} was used to solve several questions in geometric probability (related to nearest neighbor distances and applications in statistics) in \cite{chatterjee3}, prove a number theoretic central limit theorem in \cite{chatterjeesound} and obtain a rate of convergence in a central limit theorem for minimal spanning trees in \cite{chatterjeesen}. When $X_1,\ldots, X_n$ are i.i.d.\ normal random variables, a ``continuous'' version of this theorem, where the perturbations are done in a continuous manner instead of replacing by independent copies, was proved in \cite{chatterjee4}. This continuous version of Theorem~\ref{mainthm} was then used to derive the so-called second order Poincar\'e inequality for the Gaussian distribution.

The remainder of this section is devoted to the proofs of Theorem \ref{mainthm} and Corollary~\ref{maincor}. % and Corollary \ref{maincor2}. 
Applications are worked out in the subsequent sections. 

\begin{proof}[Proof of Theorem \ref{mainthm}]
Consider the sum
\begin{align*}
\sum_{i=1}^n \sum_{A\subseteq[n]\backslash\{i\}} \nu(A) \Delta_if^A\, .
\end{align*}
Clearly, this is a linear combination of $\{f(X^A), A\subseteq [n]\}$.
It is a matter of simple verification that the positive and negative coefficients of $f(X^A)$ in this linear combination cancel out except when $A=[n]$ or $A=\emptyset$. In fact, the above expression is identically equal to $f(X)-f(\xp)$.

Let $g:\xx\to \rr$ be another measurable function. Fix $A$ and $i\not\in A$, and let $U = g(X)\Delta_if^A$. Then $U$ is a function of the random vectors $X$ and $\xp$. The joint distribution of $(X,\xp)$ remains unchanged if we interchange $X_i$ and $\xp_i$. Under this operation, $U$ changes to $U^\prime := -g(X^i)\Delta_if^A$. Thus, 
\begin{align*}
&\ee(U)=\ee(U^\prime) = \frac{1}{2}\ee(U + U^\prime)= \frac{1}{2}\ee\bigl(\Delta_ig\Delta_if^A\bigr)\, .
\end{align*}
As a consequence of the above steps and the assumption that $\ee W = 0$, we arrive at the identity 
\begin{align}
\ee(g(X)W) &= \ee\bigl(g(X)(f(X)-f(\xp))\bigr)\nonumber\\
&= \ee\biggl(\sum_{i=1}^n \sum_{A\subseteq [n]\backslash\{i\}} \nu(A) g(X) \Delta_i f\biggr)\nonumber \\
&= \frac{1}{2}\ee\biggl(\sum_{i=1}^n \sum_{A\subseteq [n]\backslash\{i\}} \nu(A) \Delta_i g \Delta_i f^A\biggr)\, .\nonumber %\label{covid}
\end{align}
In particular, taking $g=f$ gives  $\ee T = \ee W^2 = 1$. Next, take any $\varphi:\rr\ra\rr$ that belongs to the class $\md$ defined in Proposition \ref{mainprop}. Let $g := \varphi \circ f$. 
%For each $A\subseteq [n]$ and $i \not \in A$, let
%\[
%R_{A,i} := \Delta_i g\Delta_if^A\, \ \text{ and } \ \tilde{R}_{A,i} := \vp(W)\Delta_if\Delta_if^A\, .
%\]
%The identity \eqref{covid} implies that 
By the above identity, 
\begin{align*}%\label{step1}
\ee(\varphi(W) W) &= \frac{1}{2} \sum_{i=1}^n \sum_{A\subseteq [n]\backslash\{i\}} \nu(A) \ee(\Delta_i g \Delta_i f^A).
\end{align*} 
By the mean value theorem and the fact that $|\varphi''(x)|\le 1$ for all $x$, 
\begin{align*}%\label{step2}
\ee|\Delta_i g \Delta_i f^A - \vp(W) \Delta_i f\Delta_i f^A| &\le  \frac{1}{2}\ee|(\Delta_if)^2\Delta_if^A|\le \frac{1}{2}\ee|\Delta_if|^3\, ,
\end{align*}
where the last step follows by H\"older's inequality. Combining the last two displays gives 
\begin{align*}
|\ee(\varphi(W)W) - \ee(\vp(W) T)| &\le \frac{1}{4}\sum_{i=1}^n \sum_{A\subseteq [n]\backslash\{i\}} \nu(A) \ee|\Delta_i f|^3 = \frac{1}{4}\sum_{i=1}^n \ee|\Delta_i f|^3\, .
\end{align*}
Next, note that since $\ee T = 1$ and $|\vp(x)|\le 1$ for all $x$, 
\begin{align*}
|\ee(\vp(W) T) - \ee\vp(W)| &= |\ee(\vp(W)(\ee(T|W) - 1))|\\
&\le \ee|\ee(T| W)-1| \le \sqrt{\var (\ee(T|W))}\, .
\end{align*}
By the last two displays, 
\[
|\ee(\varphi(W)W - \vp(W))|\le \sqrt{\var (\ee(T|W))} + \frac{1}{4}\sum_{i=1}^n \ee|\Delta_i f|^3\, .
\]
Since this is true for any $\varphi \in \md$, Proposition \ref{mainprop} completes the proof of Theorem~\ref{mainthm}.
\end{proof}
\begin{proof}[Proof of Corollary \ref{maincor}]
Observe that
\begin{align*}
\var T &\le \frac{1}{4}\sum_{i,j=1}^n \sum_{\substack{A\subseteq [n]\backslash\{i\}\\B\subseteq [n]\backslash\{j\}} }\nu(A) \nu(B)\, \cov(\Delta_i f \Delta_i f^A, \, \Delta_j f \Delta_j f^B) \\
&\le \frac{1}{4}\sum_{i,j=1}^n \sum_{\substack{A\subseteq [n]\backslash\{i\}\\B\subseteq [n]\backslash\{j\}} }\nu(A) \nu(B)\, c_{ij} = \frac{1}{4}\sum_{i,j=1}^n c_{ij}\, .
\end{align*}
To complete the proof, apply Theorem \ref{mainthm} and the inequality $(x+y)^{1/2}\le x^{1/2}+y^{1/2}$ to separate out the two terms in the error bound. 
\end{proof}

\section{Application to minimal spanning trees}\label{mstsec}
In this section, I will describe an application of the generalized perturbative method to prove a central limit theorem for minimal spanning trees on lattices with random edge weights. This is a small subset of a joint work with Sen \cite{chatterjeesen}. The major objective of~\cite{chatterjeesen} was to obtain a rate of convergence, using the generalized perturbative approach, in a central limit theorem for the Euclidean minimal spanning tree due to Kesten and Lee \cite{kesten}.  Kesten and Lee  used the martingale central limit theorem to solve this problem (without an error bound), which was a long-standing open question at the time of its solution (except for the two-dimensional case, which was solved by Alexander \cite{alexander}). My interest in this area stemmed from a quest to understand normal approximation in random combinatorial optimization. Many such problems are still wide open. I will talk about some of them in the next section. 

Let $E$ be the set of edges of the integer lattice $\zz^d$. Let $(\omega_e)_{e\in E}$ be a set of i.i.d.\ edge weights, drawn from a continuous probability distribution on the positive real numbers with bounded support. For each $n$, let $V_n$ be the set $[-n,n]^d \cap \zz^d$, and let $E_n$ be the set of edges of $V_n$. The {\it minimal spanning tree} on the graph $G_n = (V_n, E_n)$ with edge weights $(\omega_e)_{e\in E_n}$ is the spanning tree that minimizes the sum of edge weights. Since the edge-weight distribution is continuous, this tree is unique. 

Let $M_n$ be the sum of edge weights of the minimal spanning tree on $G_n$. We will now see how to use Corollary \ref{maincor} to give a simple proof of the following central limit theorem for $M_n$. 
\begin{theorem}[Corollary of Theorem 2.4 in \cite{chatterjeesen}]\label{mst}
Let $\mu_n := \ee M_n$, $\sigma_n^2 := \var M_n$, and 
\[
f_n = f_n((\omega_e)_{e\in E_n}) := \frac{M_n - \mu_n}{\sigma_n}\, ,
\]
so that $f_n$ is a standardized version of $M_n$, with mean zero and variance one. Then $f_n$ converges in law to the standard normal distribution as $n$ goes to infinity.
\end{theorem}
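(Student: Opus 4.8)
The plan is to apply Corollary~\ref{maincor}, taking the coordinate index set to be the edge set $E_n$ (so the ``$n$'' of the corollary is $|E_n|$, which is of order $n^d$) and $X_e=\omega_e$, so that $W=f_n$, which indeed has mean zero and variance one. Everything then rests on three facts about the minimal spanning tree: \textbf{(i)} $M_n$ is $1$-Lipschitz in each edge weight; \textbf{(ii)} its variance grows like the volume, $\sigma_n^2\asymp n^d$; and \textbf{(iii)} perturbing a single edge weight disturbs the tree only \emph{locally}, with a range of influence having a rapidly decaying tail. Facts (i) and (ii) control the term $\sum_{e\in E_n}\ee|\Delta_e f_n|^3$ in Corollary~\ref{maincor}, while (iii) controls the covariance term $\sum_{e,f}c_{ef}$.

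For (i): $M_n(\omega)$ is the lower envelope of the linear maps $\omega\mapsto\sum_{e\in\mathcal T}\omega_e$ over spanning trees $\mathcal T$, hence concave and nondecreasing in each coordinate, with all one-sided partial derivatives equal to $0$ or $1$; so changing $\omega_e$ moves $M_n$ by at most $|\omega_e-\omega_e'|\le C$, where $C$ is the length of the bounded interval supporting the weight law. Thus $|\Delta_e f_n|\le C/\sigma_n$ and $|\Delta_e f_n^A|\le C/\sigma_n$ deterministically, for every $e$ and every $A$. For (ii): the lower bound $\sigma_n^2\ge c\,n^d$ is part of the Kesten--Lee analysis \cite{kesten} (and is also accessible via a martingale-difference decomposition of $M_n-\ee M_n$ over the edges, a positive fraction of which contribute order one). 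Combining, $\sum_{e\in E_n}\ee|\Delta_e f_n|^3\le |E_n|\,C^3\sigma_n^{-3}=O(n^{-d/2})\to 0$, so the second term of Corollary~\ref{maincor} vanishes.

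The covariance term is the real work. Fact (i) alone gives $|\cov(\Delta_e f_n\,\Delta_e f_n^A,\ \Delta_f f_n\,\Delta_f f_n^B)|\le C^4/\sigma_n^4$ uniformly in $A,B$, but summed over all pairs this is merely $O(1)$, so locality is indispensable. The form of (iii) one needs is: for each edge $g$ and radius $r$, outside an event of probability at most $q(r)$ --- where $q$ does not depend on which coordinates have been resampled and decays fast enough that $\sum_{x\in\zz^d}q(|x|)<\infty$ (a high enough polynomial, or stretched-exponential, tail) --- both $\Delta_g f_n$ and, for any $A$, $\Delta_g f_n^A$ are measurable functions of the edge weights within lattice distance $r$ of $g$ in the ambient (possibly partially resampled) environment. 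Granting this, fix $e,f$, set $r=\mathrm{dist}(e,f)$, and apply (iii) at radius $r/2$ to each of $e$ and $f$: off an event of probability at most $4q(r/2)$, the $e$-factors are built from weights in a ball about $e$ and the $f$-factors from weights in a disjoint ball about $f$, hence are independent; truncating both factors at the deterministic bound $C^2/\sigma_n^2$ on the complementary event then gives $c_{ef}\le C'\,q(r/2)/\sigma_n^4$ for all $A,B$. Summability of $q$ yields $\sum_{f}c_{ef}\le C''/\sigma_n^4$ uniformly in $e$, so $\sum_{e,f}c_{ef}\le C''\,|E_n|/\sigma_n^4=O(n^{-d})$, and the first term of Corollary~\ref{maincor} is $O(n^{-d/4})\to 0$. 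This proves the theorem.

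The main obstacle is establishing (iii) with tails strong enough to survive summation against the $\Theta(n^d)$ pairs of edges, uniformly over the box $V_n$ and over the resampling operations. This is precisely where the structure theory of the minimal spanning tree must be invoked --- its description via invasion percolation and its links to percolation at criticality, together with the associated arm and connectivity estimates in the vein of Alexander \cite{alexander} and Kesten and Lee \cite{kesten}. Turning those estimates into the quantitative, uniform locality statement used above is the technical heart of \cite{chatterjeesen}; the rest of the argument is bookkeeping around Corollary~\ref{maincor}.
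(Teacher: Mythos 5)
Your overall architecture matches the paper's: apply Corollary \ref{maincor} with the index set $E_n$, kill the third-moment term using the deterministic bound $|\Delta_e f_n|\le C/\sigma_n$ together with the variance lower bound $\sigma_n^2\ge cn^d$, and kill the covariance term by arguing that distant discrete derivatives are nearly independent. The gap is in the step you yourself flag as ``the technical heart'': your fact (iii) --- a radius-of-influence statement with a tail $q(r)$ that is summable over $\zz^d$, uniformly over the box and over arbitrary resampled environments $\omega^A$ --- is never established, and the route you propose for it (invasion percolation, arm and connectivity estimates at criticality) is both far harder than what the theorem requires and, in general dimension $d$, not something one can simply quote. So as written the argument reduces the theorem to a strong quantitative locality estimate that remains unproven.

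The paper avoids this entirely by two elementary observations that your proposal misses. First, Lemma \ref{difflmm} gives an exact formula $\Delta_e M_n=(\alpha_{e,n}-\omega_e')^+-(\alpha_{e,n}-\omega_e)^+$, where $\alpha_{e,n}$ is the smallest $\alpha$ such that the endpoints of $e$ are joined in $V_n\setminus\{e\}$ by a path with all weights $\le\alpha$; this reduces the locality of the discrete derivative to the locality of the single scalar $\alpha_{e,n}$. Second, $\alpha_{e,n}$ is monotone nonincreasing in the box (Lemma \ref{monotone}), so $\ee(\alpha_{c,k})$ is a Cauchy sequence, and sandwiching $V_n$ between $e+V_k$ and $e+V_{2n}$ gives $\ee|\beta_{e,k}-\alpha_{e,n}|\le\delta_k$ with $\delta_k\to0$ (Lemma \ref{monotone2}) --- a purely soft argument with no rate and no percolation input. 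One then fixes $k$, bounds the covariance for pairs of edges at lattice distance $>2k$ by $C\delta_k/\sigma_n^4$ (exact independence of the $k$-local surrogates $\gamma_{e,k}$ plus the $L^1$ approximation of Corollary \ref{indep}), crudely bounds the $O(n^{2d-1}k)$ remaining pairs, and sends $n\to\infty$ before $k\to\infty$. This yields the qualitative CLT without any summable tail; your summable-$q$ scheme would be needed only if one insisted on extracting a rate of convergence by this bookkeeping, which is exactly the harder analysis the survey deliberately omits and defers to \cite{chatterjeesen}. One further small point: your uniform covariance bound $C^4/\sigma_n^4$ summed over all pairs gives $O(1)$, and you correctly note this is not enough --- but note that the paper's resolution still uses that crude bound for the near-diagonal pairs; what saves the day is that only $O(n^{2d-1}k)$ of the $O(n^{2d})$ pairs are near-diagonal, not any refined estimate on them.
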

Note that the above theorem does not have a rate of convergence. Theorem~2.4 in~\cite{chatterjeesen} has an explicit rate of convergence, but the derivation of that rate will take us too far afield; moreover that will be an unnecessary digression from the main purpose of this section, which is to demonstrate a  nontrivial application of the generalized perturbative approach. In the remainder of this section, I will present a short proof of Theorem \ref{mst} using the version of the generalized perturbative approach given in Corollary \ref{maincor}. 

To apply Corollary \ref{maincor}, we first have to understand how $M_n$ changes when one edge weight is replaced by an independent copy. This is a purely combinatorial issue. Following the notation of the previous section, I will denote the difference by $\Delta_e M_n$. The goal, eventually, is to show that $\Delta_e M_n$ is approximately equal to a quantity that depends only on some kind of a local neighborhood of $e$. This will allow us to conclude that the covariances in Corollary \ref{maincor} are small. The following lemma gives a useful formula for the  discrete derivative $\Delta_e M_n$, which is a first step towards this eventual goal.
\begin{lemma}\label{difflmm}
For each edge $e\in E$ and each $n$ such that $e\in E_n$, let $\alpha_{e,n}$ denote the smallest real number $\alpha$ such that there is a path from one endpoint of $e$ to the other,  lying entirely in $V_n$ but not containing the edge $e$, such that all edges on this path have weight $\le \alpha$. If the edge weight $\omega_e$ is replaced by an independent copy $\omega'_e$, and $\Delta_eM_n$ denotes the resulting change in $M_n$, then 
$\Delta_e M_n = (\alpha_{e,n}- \omega_e')^+ - (\alpha_{e,n}- \omega_e)^+$ 
where $x^+$ denotes the positive part of $x$. 
\end{lemma}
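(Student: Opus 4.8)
The plan is to use the well-known cycle/cut characterization of the minimal spanning tree (MST): an edge $e$ belongs to the MST of a finite graph if and only if, among all edges lying on some cycle through $e$, the weight of $e$ is not the strict maximum; equivalently, $e$ is \emph{not} in the MST iff there is a path between its endpoints (avoiding $e$) all of whose edges have weight strictly less than $\omega_e$. In the notation of the lemma, $e$ is in the MST of $G_n$ precisely when $\omega_e < \alpha_{e,n}$ (with ties not an issue because the edge-weight distribution is continuous), and the quantity $\alpha_{e,n}$ itself does not depend on $\omega_e$ — it is determined by the weights of all the \emph{other} edges in $V_n$. First I would fix the values of all edge weights except $\omega_e$, so that $\alpha_{e,n}$ is a fixed number, and then analyze $M_n$ as a function of the single variable $\omega_e$ alone.

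The key computation is to show that, as a function of $t := \omega_e$, the total MST weight equals $M_n(t) = \min(t,\alpha_{e,n}) + c$ for a constant $c$ not depending on $t$. To see this, consider the two regimes. If $t < \alpha_{e,n}$, then $e$ is in the MST; I would argue, via the exchange/cycle property, that the tree itself does not change as $t$ varies within $(-\infty,\alpha_{e,n})$ (changing the weight of an edge that stays in the tree, without changing its rank relative to the cycle-completing edges, does not alter the optimal tree), so $M_n(t) = t + (\text{sum of the other tree edges})$, which is of the form $t + c$. If $t > \alpha_{e,n}$, then $e$ is not in the MST; the optimal tree is then the MST of $G_n \setminus \{e\}$, whose total weight is again a constant independent of $t$, and one checks this constant equals $\alpha_{e,n} + c$ by taking the limit $t \downarrow \alpha_{e,n}$ and using continuity of $t \mapsto M_n(t)$ (the MST weight is a minimum of finitely many linear functions of $t$, hence continuous). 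Combining the two regimes gives $M_n(t) = \min(t,\alpha_{e,n}) + c$. Writing $\min(t,\alpha_{e,n}) = \alpha_{e,n} - (\alpha_{e,n}-t)^+$, we get $M_n(t) = \alpha_{e,n} + c - (\alpha_{e,n}-t)^+$.

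Finally, $\Delta_e M_n$ is by definition $M_n$ evaluated at $\omega_e$ minus $M_n$ evaluated at $\omega_e'$ (with all other weights held fixed), so the constants $\alpha_{e,n}+c$ cancel and
\[
\Delta_e M_n = \bigl(-(\alpha_{e,n}-\omega_e)^+\bigr) - \bigl(-(\alpha_{e,n}-\omega_e')^+\bigr) = (\alpha_{e,n}-\omega_e')^+ - (\alpha_{e,n}-\omega_e)^+\, ,
\]
which is exactly the claimed formula. (One should note the sign convention: in Section \ref{mainresult} the discrete derivative is $\Delta_i f = f(X) - f(X^i)$, i.e.\ original minus perturbed, which matches the ordering above.) The main obstacle is the middle step — proving rigorously that the optimal tree is locally constant in $t$ on each of the two regimes and that the MST weight is continuous across the threshold $\alpha_{e,n}$; this is where the cycle property of the MST, together with the uniqueness coming from the continuous (hence almost surely all-distinct) edge weights, does the real work. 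Everything else is bookkeeping.
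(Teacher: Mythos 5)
Your proof is correct, and it reaches the formula by a genuinely different route than the paper, although both arguments rest on the same key fact, namely the characterization that $e$ belongs to the minimal spanning tree if and only if $\omega_e<\alpha_{e,n}$ (the paper isolates this as Lemma \ref{charac} and proves it by an exchange argument). From there the paper works combinatorially with the \emph{two} trees $T$ and $T'$: it observes that both are spanning trees with the same number of edges, that (when $\omega_e'<\omega_e$) every edge of $T'$ other than $e$ lies in $T$, and then splits into three cases according to the position of $\omega_e$ and $\omega_e'$ relative to the weight $\alpha_{e,n}$ of the critical edge $h$, concluding that $T'$ is either equal to $T$ or obtained from $T$ by swapping $h$ for $e$; the formula is then read off case by case. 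You instead freeze all other weights and derive the global identity $M_n(t)=\min(t,\alpha_{e,n})+c$ as a function of $t=\omega_e$, from which the difference formula is immediate. Your approach is arguably cleaner and yields more (the full functional dependence of $M_n$ on a single weight, and monotonicity/concavity in $\omega_e$ for free), while the paper's yields the extra combinatorial information about exactly how the tree changes. The one step you flag as needing work --- that the optimal tree is locally constant on each regime --- can in fact be bypassed entirely by an observation you already make: since every spanning tree either contains $e$ or not, $M_n(t)=\min(t+A,\,B)$ where $A$ is the weight of the minimal spanning tree of the graph with $e$ contracted and $B$ that of the graph with $e$ deleted, both independent of $t$; the characterization $e\in T\iff t<\alpha_{e,n}$ then forces the crossover point $B-A$ to equal $\alpha_{e,n}$, giving $M_n(t)=\min(t,\alpha_{e,n})+A$ with no appeal to local constancy of the tree. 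Your sign bookkeeping (original minus perturbed) matches the convention of Section \ref{mainresult} and the three cases of the paper's proof.
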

To prove this lemma, we first need to prove a well known characterization of the minimal spanning tree on a graph with distinct edge weights. Since we have assumed that the edge weight distribution is continuous, the weights of all edges and paths are automatically distinct with probability one. 
\begin{lemma}\label{charac}
An edge $e\in E_n$ belongs to the minimal spanning tree on $G_n$ if and only if $\omega_e< \alpha_{e,n}$. Moreover, if $h$ is the unique edge with weight $\alpha_{e,n}$, then the lighter of the two edges $e$ and $h$ belongs to the tree and the other one does not. 
\end{lemma}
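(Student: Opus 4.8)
The plan is to use the classical ``cut and cycle'' reasoning, specialized to the situation where all edge weights and all path weights are distinct. First I would establish the easy direction by an exchange argument: suppose $e$ is in the minimal spanning tree $\mathcal{T}$ but $\omega_e > \alpha_{e,n}$. Removing $e$ from $\mathcal{T}$ disconnects it into two components, with the endpoints of $e$ lying in different components. By definition of $\alpha_{e,n}$ there is a path $P$ from one endpoint of $e$ to the other, inside $V_n$, avoiding $e$, with every edge of weight $\le \alpha_{e,n} < \omega_e$. At least one edge $h$ of $P$ must cross the cut (join the two components), since $P$ connects the two sides. Then $(\mathcal{T}\setminus\{e\})\cup\{h\}$ is again a spanning tree, with strictly smaller total weight because $\omega_h \le \alpha_{e,n} < \omega_e$ — contradicting minimality. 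Hence $e\in\mathcal{T}\implies \omega_e<\alpha_{e,n}$.

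For the converse I would argue in the same spirit via cycles. Suppose $\omega_e<\alpha_{e,n}$ but $e\notin\mathcal{T}$. Adding $e$ to $\mathcal{T}$ creates a unique cycle $C$; the edges of $C$ other than $e$ form a path between the endpoints of $e$ lying in $V_n$ and avoiding $e$, so by the \emph{minimality} in the definition of $\alpha_{e,n}$ the heaviest edge $g$ on this path has weight $\ge \alpha_{e,n} > \omega_e$ (here I use that $\alpha_{e,n}$ is the smallest $\alpha$ for which \emph{some} such path has all weights $\le\alpha$, so every such path contains an edge of weight $\ge\alpha_{e,n}$). Then $(\mathcal{T}\setminus\{g\})\cup\{e\}$ is a spanning tree of strictly smaller weight, again a contradiction. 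This gives $\omega_e<\alpha_{e,n}\implies e\in\mathcal{T}$, completing the ``if and only if''.

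For the last sentence: let $h$ be the unique edge realizing the weight $\alpha_{e,n}$, i.e.\ $\omega_h=\alpha_{e,n}$, lying on an optimal path $P^\ast$ witnessing the definition. If $\omega_e<\omega_h=\alpha_{e,n}$, the first part already shows $e\in\mathcal{T}$; one then checks $h\notin\mathcal{T}$ by noting that $h$ lies on the cycle created by adding $e$ to $\mathcal{T}$ and is the heaviest edge of that cycle (every other edge of $P^\ast$ has weight $<\alpha_{e,n}=\omega_h$, and $\omega_e<\omega_h$), so $h$ cannot be a tree edge. Symmetrically, if $\omega_h<\omega_e$, then $\omega_e>\alpha_{e,n}$ forces $e\notin\mathcal{T}$ by the first part, and one shows $h\in\mathcal{T}$: apply the already-proved ``if'' direction to the edge $h$, observing that the path $(P^\ast\setminus\{h\})\cup\{e\}$ (or a sub-path thereof) certifies $\alpha_{h,n}>\omega_h$, since that path avoids $h$, stays in $V_n$, and has all edges of weight $<\omega_h$ except possibly $e$ — and this needs a small argument.

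The main obstacle I anticipate is precisely this last point: making the comparison $\alpha_{h,n}>\omega_h$ rigorous requires care about whether the relevant alternative path genuinely avoids $h$ and stays inside $V_n$, and about the interplay between the two quantities $\alpha_{e,n}$ and $\alpha_{h,n}$. Everything else is a routine packaging of the standard cut/cycle optimality conditions for minimal spanning trees, made clean by the almost-sure distinctness of all edge and path weights coming from the continuity of the edge-weight distribution.
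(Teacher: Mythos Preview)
Your argument for the ``if and only if'' is essentially the paper's: both directions are the standard exchange (cut/cycle) reasoning, and your writeup is correct there.

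For the second assertion there are two issues. In the case $\omega_e<\omega_h$ you write that ``$h$ lies on the cycle created by adding $e$ to $\mathcal{T}$'', but you have just shown $e\in\mathcal{T}$, so adding $e$ creates nothing. What you presumably intend is the cycle $\{e\}\cup P^\ast$ in $G_n$; then going around this cycle while avoiding $h$ gives the walk $(P^\ast\setminus\{h\})\cup\{e\}$, all of whose weights are strictly less than $\omega_h$, so $\alpha_{h,n}<\omega_h$ and hence $h\notin\mathcal{T}$ by the first part. This is exactly the paper's (terse) argument: ``$e\in T$ and $h\notin T$ by the first part''.

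In the case $\omega_h<\omega_e$ you correctly identify the obstacle: the candidate path $(P^\ast\setminus\{h\})\cup\{e\}$ contains $e$, whose weight exceeds $\omega_h$, so it does \emph{not} certify $\alpha_{h,n}>\omega_h$; that path only gives $\alpha_{h,n}\le\omega_e$, which is useless. The paper's resolution is not a direct construction but a contradiction exploiting the \emph{minimality} in the definition of $\alpha_{e,n}$. Suppose $\alpha_{h,n}<\omega_h$. Then there is a path $Q$ between the endpoints of $h$, avoiding $h$, with all weights $<\omega_h=\alpha_{e,n}$. Splice $Q$ into $P^\ast$ in place of $h$: the resulting walk connects the endpoints of $e$, avoids $e$ (every edge on it has weight $<\omega_h<\omega_e$), and has all weights strictly below $\alpha_{e,n}$, contradicting the definition of $\alpha_{e,n}$. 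Hence $\alpha_{h,n}>\omega_h$ and $h\in\mathcal{T}$ by the first part. This is the ``small argument'' you were missing.
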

\begin{proof}
Let $T$ denote the minimal spanning tree. First suppose that $e\in T$. Let $T_1$ and $T_2$ denote the two connected components of $T\backslash\{e\}$. There is a path in $G_n$ connecting the two endpoints of $e$, which does not contain $e$ and whose edge weights are all $\le \alpha_{e,n}$. At least one edge $r$ in this path is a bridge from $T_1$ to $T_2$. If $\omega_e> \alpha_{e,n}$, then we can delete the edge $e$ from $T$ and add the edge $r$ to get a tree that has total weight $<M_n$, which is impossible. Therefore $\omega_e < \alpha_{e,n}$. Next, suppose that $\omega_e < \alpha_{e,n}$. Let $P$ be the unique path in $T$ that connects the two endpoints of $e$. If $P$ does not contain $e$, then $P$ must contain an edge that has weight  $\ge \alpha_{e,n} > \omega_e$. Deleting this edge from $T$ and adding the edge $e$ gives a tree with weight $< M_n$, which is impossible. Hence $T$ must contain $e$.

To prove the second assertion of the lemma, first observe that if $\omega_h > \omega_e$, then $e\in T$ and $h\not \in T$ by the first part. On the other hand  if $\omega_h < \omega_e$, then $e\not \in T$ by the first part; and if $\alpha_{h,n} < \omega_h$, then there exists a path connecting the two endpoints of $e$ whose edge weights are all $< \alpha_{e,n}$, which is impossible. Therefore again by the first part, $h\in T$. 
\end{proof}

We are now ready to prove Lemma \ref{difflmm}.

\begin{proof}[Proof of Lemma \ref{difflmm}]
%Throughout this proof, I will refer to the situations before and after replacing $\omega_e$ by $\omega_e'$ as ``old'' and ``new'' respectively. For example, we will say ``old tree'' and ``new tree'', ``old graph'' and ``new graph'', etc. The sum of edge weights in the new tree will be denoted by $M_n'$, and the value of $\alpha_{e,n}$ in the new graph will be denoted by $\alpha_{e,n}'$. Also, for ease of reference, the old and new trees will be denoted by $T$ and $T'$.

Let $T$ and $T'$ denote the minimal spanning trees before and after replacing $\omega_e$ by $\omega_e'$. Note that since $T$ and $T'$ are both spanning trees, we have (I): $T$ and $T'$ must necessarily have the same number of edges. 

By symmetry, it suffices to work under the assumption that  $\omega_e' < \omega_e$. Clearly, this implies that $\alpha_{h,n}' \le \alpha_{h,n}$ for all $h\in E_n$ and equality holds for $h=e$. Thus, by Lemma \ref{charac}, we make the observation (II): every edge in $T'$ other than $e$ must also belong to $T$.

Let $h$ be the unique edge that has weight $\alpha_{e,n}$.  There are three possible scenarios: (a) If $\omega_h < \omega_e' < \omega_e$, then by Lemma \ref{charac}, $e\not \in T$ and $e\not \in T'$. Therefore by the observations (I) and (II), $T=T'$. (b) If $\omega_e'< \omega_h <\omega_e$, then by Lemma \ref{charac}, $e\in T'$, $h\not \in T'$, $e\not \in T$ and $h\in T$. By (I) and (II), this means that $T'$ is obtained from $T$ by deleting $h$ and adding $e$. (c) If $\omega_e' < \omega_e< \omega_h$, then $e\in T$ and $e\in T'$, and therefore by (I) and (II), $T=T'$. In all three cases, it is easy to see that the formula for $\Delta_e M_n$ is valid. This completes the proof of Lemma \ref{difflmm}. 
\end{proof}

Lemma \ref{difflmm} gives an expression for $\Delta_e M_n$, but it does not make it obvious why this discrete difference is approximately equal to a local quantity. The secret lies in a monotonicity argument, similar in spirit to an idea from \cite{kesten}. 

\begin{lemma}\label{monotone}
For any $e\in E$, the sequence $\alpha_{e,n}$ is a non-increasing sequence, converging everywhere to a limiting random variable $\alpha_{e,\infty}$ as $n \ra\infty$. The convergence holds in $L^p$ for every $p> 0$. 
\end{lemma}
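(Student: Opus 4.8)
The plan is to exploit the variational (min--max) description of $\alpha_{e,n}$ together with the boundedness of the edge-weight distribution; the argument is short and uses only monotone and dominated convergence. First I would record the formula
\[
\alpha_{e,n} = \min_{P \in \mathcal{P}_{e,n}} \ \max_{f \in P} \omega_f\, ,
\]
where $\mathcal{P}_{e,n}$ is the set of paths in $V_n$ joining the two endpoints of $e$ and not using $e$ itself. This set is nonempty as soon as $V_n$ contains a cycle through $e$, which (since $d\ge 2$) happens for all $n \ge n_0(e)$, for some finite deterministic $n_0(e)$ depending only on the geometric location of $e$. Because $V_n \subseteq V_{n+1}$, the families $\mathcal{P}_{e,n}$ increase with $n$, so the above minimum is over a larger collection as $n$ grows; hence $(\alpha_{e,n})_{n\ge n_0(e)}$ is non-increasing.

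Next I would bring in the standing hypothesis that the edge weights take values in a bounded subset of $(0,\infty)$: fix $B<\infty$ with $\omega_f \le B$ almost surely for every $f \in E$. Then for every $n \ge n_0(e)$, any path in $\mathcal{P}_{e,n}$ has all its weights at most $B$, so $0 < \alpha_{e,n} \le B$. A non-increasing sequence lying in the bounded interval $(0,B]$ converges everywhere to its infimum, so we may define $\alpha_{e,\infty} := \lim_{n\to\infty}\alpha_{e,n} = \inf_{n\ge n_0(e)} \alpha_{e,n}$. As a pointwise limit of measurable functions this is a bona fide random variable, with $0 \le \alpha_{e,\infty} \le B$.

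Finally, the $L^p$ claim is immediate from dominated convergence: for every $p>0$ and every $n \ge n_0(e)$ we have $|\alpha_{e,n} - \alpha_{e,\infty}|^p \le B^p$, a constant (hence integrable on the underlying probability space), while $\alpha_{e,n} \to \alpha_{e,\infty}$ everywhere; therefore $\ee|\alpha_{e,n} - \alpha_{e,\infty}|^p \to 0$. There is no substantial obstacle in this lemma; the only points needing a word of care are that $\alpha_{e,n}$ is eventually well defined (a cycle through $e$ must fit inside $V_n$, which is where $d\ge 2$ enters) and that the uniform bound $\alpha_{e,n}\le B$ --- the crux of the dominated convergence step --- is precisely where the bounded-support assumption on the edge weights is used.
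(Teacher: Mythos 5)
Your proof is correct and follows essentially the same route as the paper's: monotonicity from the nested domains $V_n \subseteq V_{n+1}$, pointwise convergence of a bounded monotone sequence, and $L^p$ convergence via the uniform bound coming from the bounded-support assumption on the edge weights. The extra care about $\alpha_{e,n}$ being well defined only for $n$ large enough is a reasonable (if minor) addition that the paper elides.
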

\begin{proof}
The monotonicity is clear from the definition of $\alpha_{e,n}$. Since the sequence is non-negative, the limit exists. The $L^p$ convergence holds because the random variables are bounded by a constant (since the edge weights are bounded by a constant). 
\end{proof}

Now let $c$ denote a specific edge of $E$, let's say the edge joining the origin to the point $(1,0,\ldots,0)$. For any edge $e$, let $e+V_n$ denote the set $x + [-n,n]^d \cap V_n$, where $x$ is the lexicographically smaller endpoint of $e$. In other words, $e+V_n$ is simply a translate of $V_n$ so that $0$ maps to $x$. Let $e+E_n$ be the set of edges of $e+V_n$. For each $e$, let $\beta_{e,n}$ be the smallest $\beta$ such that there is a path from one endpoint of $e$ to the other,  lying entirely in $e+V_n$ but not containing the edge $e$, such that all edges on this path have weight $\le \beta$. Clearly, $\beta_{e,n}$ has the same distribution as $\alpha_{c,n}$. The following lemma says that for a fixed edge $e$, if $n$ and $k$ and both large, and $n$ is greater than $k$, then $\alpha_{e,n}$ may be closely approximated by $\beta_{e,k}$. 
\begin{lemma}\label{monotone2}
There is a sequence $\delta_k$ tending to zero as $k\ra\infty$, such that for any $1\le k < n$ and $e\in E_{n-k}$, $
\ee|\beta_{e, k} - \alpha_{e,n}| \le\delta_k $. 
\end{lemma}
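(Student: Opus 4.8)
The plan is to compare $\beta_{e,k}$ and $\alpha_{e,n}$ by interpolating through a common quantity: the smallest weight $\gamma$ admitting a connecting path inside the translate $e+V_k$ of the box of radius $k$ centered appropriately at $e$, versus the smallest weight admitting a connecting path inside the full box $V_n$. Since $e \in E_{n-k}$, the translate $e+V_k$ is entirely contained in $V_n$, so any path realizing $\beta_{e,k}$ is also a valid path in $V_n$; hence $\alpha_{e,n} \le \beta_{e,k}$ deterministically. Thus it suffices to bound $\ee(\beta_{e,k} - \alpha_{e,n})$ from above, and by translation invariance of the i.i.d.\ weights this equals $\ee(\beta_{c,k} - \alpha_{c,n})$ for the reference edge $c$ (where I abuse notation and write $\alpha_{c,n}$ for the quantity defined using the box $e+V_n \supseteq e+V_k$; note $\alpha_{c,n} \le \alpha_{e,n}$ is not needed, only that the distributions match after centering at $e$).

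Next I would reduce everything to a statement about a single sequence. Define $\beta_{c,k}$ as above (connecting path inside $c+V_k$) and note $\beta_{c,k}$ is non-increasing in $k$ by the same monotonicity argument as in Lemma \ref{monotone}: enlarging the box only adds candidate paths. The sequence is bounded (edge weights have bounded support), non-negative, hence converges a.s.\ and, being bounded, in $L^1$ to some limit $\beta_{c,\infty}$. Set $\delta_k := \ee(\beta_{c,k} - \beta_{c,\infty})$, which tends to $0$ as $k \to \infty$. The key observation is then: for $e \in E_{n-k}$ we have the sandwich $\beta_{c,\infty} \le \alpha_{e,n} \le \beta_{e,k}$ after re-centering — more precisely, $\beta_{e,\infty} \le \alpha_{e,n}$ because $\alpha_{e,n}$ is computed in the box $V_n$ which contains arbitrarily large translates of coordinate boxes around $e$ as $n$ grows, so $\alpha_{e,n} \ge \inf_m \beta_{e,m} = \beta_{e,\infty}$; and $\alpha_{e,n} \le \beta_{e,k}$ as noted above. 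Therefore $0 \le \beta_{e,k} - \alpha_{e,n} \le \beta_{e,k} - \beta_{e,\infty}$, and taking expectations, $\ee|\beta_{e,k} - \alpha_{e,n}| \le \ee(\beta_{e,k} - \beta_{e,\infty}) = \delta_k$, uniformly over $n > k$ and $e \in E_{n-k}$.

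The one point requiring care — and the main obstacle — is justifying $\alpha_{e,n} \ge \beta_{e,\infty}$, i.e.\ that the finite-box connectivity threshold $\alpha_{e,n}$ is bounded below (uniformly in $n$) by the infinite-volume threshold using only translates of coordinate boxes. This is essentially a containment statement: any path in $V_n$ from one endpoint of $e$ to the other, avoiding $e$, must in particular be a path in some large translate $e + V_m$ with $m \le$ roughly $2n$, so $\alpha_{e,n} \ge \beta_{e,m} \ge \beta_{e,\infty}$. One must check that $V_n$ is contained in a suitable translate $e + V_m$ with $m$ finite (true since both are finite boxes and $e \in E_n$ forces $e$ near the interior enough, or one simply takes $m = 3n$, say), so that the monotonicity $\beta_{e,m} \ge \beta_{e,\infty}$ applies. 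Once this containment bookkeeping is in place, the rest is the soft $L^1$-convergence argument of Lemma \ref{monotone}, and the uniformity over $e$ and $n$ is automatic because the bound $\delta_k$ depends only on $k$ through the translation-invariant distribution of $\beta_{c,k} - \beta_{c,\infty}$.
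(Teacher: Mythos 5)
Your argument is correct and is essentially the paper's own proof: both rest on the sandwich $e+V_k \subseteq V_n \subseteq e+V_{m}$ for a suitable finite $m$ (the paper takes $m=2n$, you take $m=3n$), the monotonicity of the thresholds under enlarging the box, and translation invariance to reduce to the single reference edge $c$. The only cosmetic difference is that you pass to the limit $\beta_{c,\infty}$ and set $\delta_k := \ee(\beta_{c,k})-\ee(\beta_{c,\infty})$, whereas the paper bounds by $\ee(\alpha_{c,k})-\ee(\alpha_{c,2n})$ and invokes the Cauchy property of $\ee(\alpha_{c,k})$ from Lemma \ref{monotone}; these are interchangeable.
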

\begin{proof}
Since $e+V_{k}\subseteq V_n$, $\beta_{e,k} \ge \alpha_{e,n}$. Thus, $\ee|\beta_{e,k} - \alpha_{e,n}| = \ee(\beta_{e,k}) - \ee(\alpha_{e,n})$. But again, $V_n \subseteq e+V_{2n}$, and so $\alpha_{e,n} \ge \beta_{e, 2n}$. Thus,
\begin{align*}
\ee|\beta_{e,k} - \alpha_{e,n}| \le  \ee(\beta_{e,k}) - \ee(\beta_{e, 2n}) = \ee(\alpha_{c,k}) - \ee(\alpha_{c, 2n})\, .
\end{align*}
By Lemma \ref{monotone}, $\ee(\alpha_{c,k})$ is a Cauchy sequence. This completes the proof. 
\end{proof}
Combining Lemma \ref{monotone2} and Lemma \ref{difflmm}, we get the following corollary that gives the desired ``local approximation'' for the discrete derivatives of  $M_n$. 
\begin{corollary}\label{indep}
For any $k\ge 1$ and $e\in E$, let $\gamma_{e,k} := (\beta_{e,k} - \omega'_e)^+ - (\beta_{e,k} - \omega_e)^+$. Then for any $n > k$ and  $e\in E_{n-k}$, 
\[
\ee|\Delta_e M_n - \gamma_{e,k}|\le 2\delta_k\, ,
\]
where $\delta_k$ is a sequence tending to zero as $k \to \infty$. 
\end{corollary}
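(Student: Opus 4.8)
The plan is to reduce everything to the pointwise Lipschitz bound for the positive-part function together with Lemma \ref{monotone2}. Start from the exact formula supplied by Lemma \ref{difflmm}: for $e\in E_n$,
\[
\Delta_e M_n = (\alpha_{e,n}-\omega_e')^+ - (\alpha_{e,n}-\omega_e)^+\, .
\]
Observe that $\gamma_{e,k}$ is built in precisely the same way, with $\beta_{e,k}$ playing the role of $\alpha_{e,n}$. Hence
\[
\Delta_e M_n - \gamma_{e,k} = \bigl[(\alpha_{e,n}-\omega_e')^+ - (\beta_{e,k}-\omega_e')^+\bigr] - \bigl[(\alpha_{e,n}-\omega_e)^+ - (\beta_{e,k}-\omega_e)^+\bigr]\, ,
\]
and one bounds each of the two bracketed differences separately.

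The key input is that the map $x\mapsto x^+$ is $1$-Lipschitz, so that for any fixed value of $\omega_e'$ (respectively $\omega_e$) one has $|(\alpha_{e,n}-\omega_e')^+ - (\beta_{e,k}-\omega_e')^+|\le |\alpha_{e,n}-\beta_{e,k}|$, and likewise with $\omega_e$ in place of $\omega_e'$. Combining these with the triangle inequality gives the deterministic bound $|\Delta_e M_n - \gamma_{e,k}|\le 2|\alpha_{e,n}-\beta_{e,k}|$, valid pointwise on the probability space. Taking expectations and invoking Lemma \ref{monotone2} — which applies exactly because the hypothesis $e\in E_{n-k}$ guarantees $e+V_k\subseteq V_n$, so that $\beta_{e,k}\ge \alpha_{e,n}$ and $\ee|\beta_{e,k}-\alpha_{e,n}|\le \delta_k$ with $\delta_k\to 0$ — yields $\ee|\Delta_e M_n - \gamma_{e,k}|\le 2\delta_k$, which is the claim.

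There is essentially no obstacle at this stage: all the real work has already been done in Lemmas \ref{difflmm}, \ref{monotone} and \ref{monotone2}. The only points that merit a word of care are (i) keeping track of the condition $e\in E_{n-k}$, needed so that the translated box $e+V_k$ lies inside $V_n$ and Lemma \ref{monotone2} is applicable, and (ii) noting that $\gamma_{e,k}$ depends on the edge weights only through those in $e+E_k$ and through $\omega_e,\omega_e'$, which is what makes it a genuine ``local'' approximation to $\Delta_e M_n$ and is the property that will be exploited when estimating the covariances $c_{ij}$ in the application of Corollary \ref{maincor}.
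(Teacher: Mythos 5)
Your proof is correct and is exactly the argument the paper intends: the paper gives no written proof, saying only that the corollary follows by ``combining'' Lemmas \ref{difflmm} and \ref{monotone2}, and the combination is precisely your use of the $1$-Lipschitz property of $x\mapsto x^+$ to get $|\Delta_e M_n - \gamma_{e,k}|\le 2|\alpha_{e,n}-\beta_{e,k}|$ followed by taking expectations. Nothing is missing.
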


Armed with the above corollary and Corollary \ref{maincor}, we are now ready to prove Theorem \ref{mst}. 
\begin{proof}[Proof of Theorem \ref{mst}]
Throughout this proof, $C$ will denote any constant whose value depends only on the edge weight distribution and the dimension $d$. The value of $C$ may change from line to line. 

Fix an arbitrary positive integer $k$. Take any $n > k$. Take any edge $e\in E_{n-k}$, and a set of edges $A\subseteq E_n \backslash\{e\}$. Let $(\omega_h')_{h\in E_n}$ be an independent copy of $(\omega_h)_{h\in E_n}$, and just like in Theorem \ref{mainthm}, let $\omega_h^A = \omega_h$ if $h\not \in A$, and $\omega_h^A = \omega_h'$ if $h\in A$. Let $\Delta_eM_n^A$ and $\gamma_{e,k}^A$ be the values of $\Delta_eM_n$ and $\gamma_{e,k}$ in the environment $\omega^A$.

Let $h$  be any other edge in $E_{n-k}$ such that the lattice distance between $e$ and $h$ is bigger than $2k$. Let $B$ be any subset of $E_n \backslash\{h\}$. Then by Corollary \ref{indep} and the boundedness of the discrete derivatives of $M_n$ and the $\gamma$'s, we get
\[
|\cov(\Delta_eM_n \Delta_e M_n^A, \, \Delta_h M_n \Delta_h M_n^B) - \cov(\gamma_{e,k} \gamma_{e,k}^A ,\, \gamma_{h,k}\gamma_{h,k}^B) | \le C\delta_k\, .
\]
But since $(e+V_k)\cap (h + V_k)=\emptyset$, the random variables $\gamma_{e,k} \gamma_{e,k}^A$ and $\gamma_{h,k}\gamma_{h,k}^B$ are independent. In particular, their covariance is zero. Therefore,
\[
|\cov(\Delta_eM_n \Delta_e M_n^A, \, \Delta_h M_n \Delta_h M_n^B)|\le C\delta_k\, .
\]
Note that here we are only considering $e$ and $h$ in $E_{n-k}$ that are at least $2k$ apart in lattice distance. Therefore among all pairs of edges $e,h\in E_n$, we are excluding $\le Cn^{2d-1} k$ pairs from the above bound. Those that are left out, are bounded by a constant. 

All we now need is a lower bound on the variance $\sigma_n^2$. One can show that $\sigma_n^2 \ge C n^{d}$.  This requires some work, which is not necessary to present in this article. For a proof, see \cite[Section 6.5]{chatterjeesen}. Inputting this lower bound and the covariance bounds obtained in the above paragraph into Corollary \ref{maincor}, we get
\[
\sup_{t\in \rr}|\pp(f_n \le t)-\pp(Z\le t) |\le C(\delta_k + k/n)^{1/4} + Cn^{-d/4}\, . 
\]
The proof is finished by taking $n\ra\infty$ and then taking $k\ra\infty$.
\end{proof}

\section{Some open problems}\label{open}
Probability theory has come a long way in figuring out how to prove  central limit theorems. Still, there are problems where we do not know how to proceed. Many of these problems come from random combinatorial optimization. One example of a solved problem from this domain is the central limit theorem for minimal spanning trees, discussed in Section \ref{mstsec}. But there are many others that are quite intractable.  

For example, consider the Euclidean traveling salesman problem on a set of random points. Let $X_1,\ldots, X_n$ be a set of points chosen independently and uniformly at random from the unit square in $\rr^2$. Let $P$ be a path that visits all points, ending up where it started from, which minimizes the total distance traveled among all such paths. It is widely believed that the length of $P$ should obey a central limit theorem under appropriate centering and scaling, but there is no proof. 

Again, in the same setting, we may consider the problem of minimum matching. Suppose that $n$ is even, and we pair the points into $n/2$ pairs such that the sum total of the pairwise distances is minimized. It is believed that this minimum matching length should be approximately normally distributed, but we do not know how to prove that. 

One may also consider lattice versions of the above problems, where instead of points in Euclidean space we have random weights on the edges of a lattice. One can still talk about the minimum weight path that visits all points on a finite segment of the lattice, and the minimum weight matching of pairs of points. Central limit theorems should hold for both of these quantities. 

For basic results about such models, a classic reference is the monograph of Steele \cite{steele}. The reason why one may speculate that normal approximation should hold is that the solutions of these problems are supposed to be ``local'' in nature. For example, the optimal path in the traveling salesman problem is thought to be of ``locally determined''; one way to make this a little more precise is by claiming that a small perturbation at a particular location is unlikely to affect the path in some faraway neighborhood. This is the same as what we earlier called ``the approximate independence of small perturbations''. If this is proven to be indeed the case, then the generalized perturbative version of Stein's method should be an adequate tool for proving a central limit theorem. 

Mean field versions of these problems, which look at complete graphs instead of lattices or Euclidean points, have been analyzed in great depth in a remarkable set of papers by W\"astlund \cite{wastlund10, wastlund12}. In the case of minimum matching, this generalizes the famous work of Aldous \cite{aldous} on the random assignment problem. These papers, however, do not prove central limit theorems. It is an interesting question whether the insights gained from W\"astlund's works can be applied to prove normal approximation in the mean field setting by rigorously proving the independence of small perturbations. %But the ``short-range'' versions, either on a lattice or in the Euclidean setting, are poorly understood. 

Another class of problems that may be attacked by high dimensional versions of Stein's method are problems of universality in physical models. There are various notions of universality; the one that is closest to standard probability theory is the following. Suppose that $Z = (Z_1,\ldots, Z_n)$ is a vector of i.i.d.\ standard normal random variables, and $X = (X_1,\ldots, X_n)$ is a vector of  i.i.d.\ random variables from some other distribution, with mean zero and variance one. Let $f:\rr^n \ra \rr$ be some given function. When is it true that $f(X)$ and $f(Z)$ have approximately the same probability distribution? In other words, when is it true that for all $g$ belonging to a large class of functions, $\ee g( f(X)) \approx \ee g( f(Z))$? The classical central limit theorem says that this is true if $f(x) = n^{-1/2}(x_1+\cdots + x_n)$. Lindeberg \cite{lindeberg22} gave an ingenious proof of the classical CLT in 1922 using the idea of replacing one $X_i$ by one $Z_i$ at a time, by an argument that I am going to describe below. 

The idea was generalized by Rotar \cite{rotar79} to encompass low degree polynomials. The polynomial version was applied, in combination with hypercontractive estimates, to solve several open questions in theoretical computer science by Mossel et al.~\cite{mossel}. 

I think I may have been the first one to realize in \cite{chatterjee05, chatterjee06} that the Lindeberg method applies to general functions (and not just sums and polynomials), with a potentially wide range of interesting applications. The basic idea is the following: Let $h = g\circ f$. For each $i$, let $U^i = (X_1,\ldots, X_i, Z_{i+1},\ldots, Z_n)$ and $V^i = (X_1,\ldots, X_{i-1}, 0, Z_{i+1},\ldots, Z_n)$. Then by Taylor expansion in the $i$th coordinate,
\begin{align*}
\ee h(U^i) - \ee h(U^{i-1})  &= \ee\biggl( h(V^i) + X_i \partial_i h(V^i) + \frac{1}{2}X_i^2 \partial_i^2 h(V^i)\biggr)\\
&\qquad -  \ee\biggl( h(V^i) + Z_i \partial_i h(V^i) + \frac{1}{2}Z_i^2 \partial_i^2 h(V^i)\biggr) + O(\|\partial_i^3 h\|_\infty)\, .
\end{align*} 
By the independence of the $X_i$'s and $Z_i$'s, and the assumptions that $\ee X_i = 0$ and $\ee X_i^2 = 1$, it follows that the two expectations on the right-hand side are equal. Therefore, summing over $i$, we get
\begin{align}\label{ehh}
\ee h(X)-\ee h(Z) = O\biggl(\sum_{i=1}^n \|\partial_i^3 h\|_\infty\biggr)\, .
\end{align}
If the right-hand side is small, then we get our desired conclusion. 

In \cite{chatterjee05, chatterjee06} I used this idea to give a new  proof of the universality of Wigner's semicircle law, and a proof of the universality of the free energy of the Sherrington-Kirkpatrick model of spin  glasses. The random matrix problems were tackled by choosing $h$ to be the Stieltjes transform of the empirical spectral distribution of the random matrix at a point $z\in \cc\backslash \rr$. By taking $z$ close to $\rr$ and overcoming some major technical difficulties that arise in the process, the method was later used with great effect in a series of papers by Tao and Vu \cite{taovu10a, taovu10b, taovu11} to prove universality of local eigenvalue statistics of several kinds of random matrices.

The connection with Stein's method comes through the following variant of the Lindeberg idea. Suppose, instead of the above, we consider a solution $w$ of the Stein equation 
\[
\Delta w(x) - x\cdot \nabla w(x) = h(x)-\ee h(Z)\, .
\]
Let $W^i := (X_1,\ldots, X_{i-1}, 0, X_{i+1},\ldots, X_n)$. Then by the independence of the $X_i$'s and the facts that $\ee X_i= 0$ and $\ee X_i^2 = 1$, Taylor expansion gives
\begin{align*}
\ee(X_i \partial_i w(X)) &= \ee (X_i \partial_i w(W^i) + X_i^2 \partial_i^2 w(W^i)) +  O(\|\partial_i^3 w\|_\infty)\\
&= \ee\partial_i^2 w(W^i) + O(\|\partial_i^3 w\|_\infty) = \ee\partial_i^2 w(X) + O(\|\partial_i^3 w\|_\infty)\, .
\end{align*}
Summing over $i$, this gives 
\[
\ee h(X)-\ee h(Z) = \ee(\Delta w (X) - X\cdot \nabla w(X)) =  O\biggl(\sum_{i=1}^n \|\partial_i^3 w\|_\infty\biggr)\, ,
\]
which is basically the same as \eqref{ehh}, except that we have third derivatives of $w$ instead of $h$. Undoubtedly, this is nothing but Stein's method in action. A version of this argument was used by Carmona and Hu \cite{carmonahu} to prove the universality of the free energy in the Sherrington-Kirkpatrick model, at around the same time that I proved it in \cite{chatterjee05}. Sophisticated forms of this idea have been used by Erd\H{o}s, Yau and coauthors in their remarkable series of papers \cite{erdos10a, erdos10b, erdos11, erdos12} proving universality of random matrix eigenvalue distributions, running parallel to the papers of Tao and Vu, who used the Lindeberg approach. This  demonstrates the potential for high dimensional versions of Stein's method to prove universality. There are still many problems where we do not know how to establish universal behavior (for example, last- and first-passage percolation, various polymer models, gradient Gibbs measures, etc.). It would be interesting to see Stein's method being used to attack such problems.

\vskip.2in
\noindent{\bf Acknowledgments.} I thank Susan Holmes  and Persi Diaconis for many useful comments on the first draft of this manuscript.

%\frenchspacing


\begin{thebibliography}{99}

\bibitem{aldous} Aldous, D.~J. (2001).
The $\zeta(2)$ limit in the random assignment problem. {\it Random Structures Algorithms,} {\bf 18} no. 4, 381--418. 

\bibitem{alexander} Alexander, K.~S. (1996). The RSW theorem for continuum percolation and the CLT for Euclidean minimal spanning trees. {\it Ann. Appl. Probab.,} {\bf 6} no. 2, 466--494.

\bibitem{arratia} Arratia, R., Goldstein, L. and Gordon, L. (1989). Two moments suffice for Poisson approximations: the Chen-Stein method. {\it Ann. Probab.,} {\bf 17} no. 1, 9--25. %


\bibitem{arratia2} Arratia, R., Goldstein, L. and Gordon, L. (1990).  Poisson approximation and the Chen-Stein method. {\it Statist. Sci.,} {\bf  5} no. 4, 403--434. %With comments and a rejoinder by the authors. %


\bibitem{avram} Avram, F.  and Bertsimas, D. (1993). On central limit theorems in geometrical probability. {\it Ann. Appl. Probab.,} {\bf 3} no. 4, 1033--1046. %


\bibitem{baldi1} Baldi, P. and  Rinott, Y. (1989). On normal approximations of distributions in terms of dependency graphs. {\it Ann. Probab.,} {\bf 17} no. 4, 1646--1650. %


\bibitem{baldi2} Baldi, P., Rinott,  Y. and Stein, C. (1989). A normal approximation for the number of local maxima of a random function on a graph. In {\it Probability, statistics, and mathematics,} pages 59--81. Academic Press, Boston, MA. %

\bibitem{barbour0} Barbour, A.~D. (1990). Stein's method for diffusion approximations. {\it Probab. Theory Related Fields,}  {\bf 84} no. 3, 297--322. %

\bibitem{barbourchen} Barbour, A.~D. and Chen, L.~H.~Y., editors (2005). {\it An introduction to Stein's method.}  Singapore University Press, Singapore. %



\bibitem{barbour1}  Barbour, A. D., Holst, L.  and Janson, S. (1992). {\it Poisson approximation.} Oxford Science Publications. %

\bibitem{barbour2} Barbour, A. D., Karo\'nski, M., and Ruci\'nski, A. (1989). A central limit theorem for decomposable random variables with applications to random graphs. {\it J. Combin. Theory Ser. B,} {\bf 47} no. 2, 125--145. %


\bibitem{bh} Bhattacharya, R.~N.  and Holmes, S.~P. (2010). 
An Exposition of G\"otze's Estimation of the Rate of Convergence in the Multivariate Central Limit Theorem. In {\it Bhattacharya, R.~N. and Rao, R.~R. (Eds.), Normal Approximation and Asymptotic Expansions (p. 260).} SIAM, Philadelphia, PA.

%\bibitem{biskup} Biskup, M. and Spohn, H. (2011). Scaling limit for a class of gradient fields with nonconvex potentials. {\it Ann. Probab.,} {\bf 39} no. 1, 224--251.


\bibitem{bolthausen} Bolthausen, E. (1984). An estimate of the remainder in a combinatorial central limit theorem. {\it Probab. Theory Related Fields,} {\bf 66} no. 3, 379--386. %

\bibitem{bolthausen2} Bolthausen, E. and  G\"otze, F. (1993). The rate of convergence for multivariate sampling statistics. {\it Ann. Statist.,} {\bf 21}, 1692--1710.


\bibitem{borovkov} Borovkov, A.~A. and Utev, S.~A. (1984). On an inequality and a related characterization of the normal distribution. {\it Theory Probab. Appl.,} 28(2), 219-228.

\bibitem{carmonahu} Carmona, P. and Hu, Y. (2006). Universality in Sherrington-Kirkpatrick's spin glass model. {\it Ann. Inst. H. Poincar\'e Probab. Statist.,} {\bf 42} no. 2, 215--222.


\bibitem{chatterjee05} Chatterjee, S. (2005). A simple invariance theorem. {\it arXiv preprint.}

\bibitem{chatterjee1} Chatterjee, S. (2005). {\it Concentration inequalities with exchangeable pairs.} Ph.D. dissertation, Stanford University. %

\bibitem{chatterjee06} Chatterjee, S. (2006). 
A generalization of the Lindeberg principle. 
{\it Ann. Probab.,} {\bf 34} no. 6, 2061--2076. 

\bibitem{chatterjee2} Chatterjee, S. (2007). Stein's method for concentration inequalities. {\it Probab. Theory Related Fields,} {\bf 138} nos. 1-2, 305--321. %


\bibitem{chatterjee3} Chatterjee, S. (2008). A new method of normal approximation. {\it Ann. Probab.,} {\bf 36}, no. 4, 1584--1610. %

\bibitem{chatterjee4} Chatterjee, S. (2009). Fluctuations of eigenvalues and second order Poincar\'e inequalities. {\it Probab. Theory Related Fields,} {\bf 143} nos. 1-2, 1--40. %

\bibitem{chatterjee5} Chatterjee, S. (2010). Spin glasses and SteinÕs method. {\it Probab. Theory Related Fields,} {\bf 148} nos. 3-4, 567--600. %


\bibitem{chatterjee6} Chatterjee, S. (2012). A new approach to strong embeddings. {\it Probab. Theory Related Fields,} {\bf 152} nos. 1-2, 231--264.%

\bibitem{chatterjeedey} Chatterjee, S.  and  Dey, P.~S. (2010). Applications of Stein's method for concentration inequalities. {\it Ann. Probab.,} {\bf 38} no. 6, 2443--2485. %


\bibitem{cdm} Chatterjee, S.,  Diaconis,  P. and Meckes, E. (2005). Exchangeable pairs and Poisson approximation. {\it Probab. Surv.,} {\bf 2}, 64--106. %

\bibitem{cfr} Chatterjee, S.,  Fulman, J. and R\"ollin, A. (2011).  Exponential approximation by Stein's method and spectral graph theory. {\it ALEA Lat. Am. J. Probab. Math. Stat.,} {\bf 8}, 197--223. %


\bibitem{cm} Chatterjee, S. and Meckes, E. (2008). Multivariate normal approximation using exchangeable pairs. {\it ALEA Lat. Am. J. Probab. Math. Stat.,} {\bf 4}, 257--283.%

\bibitem{chatterjeesen} Chatterjee, S. and Sen, S. (2013). Minimal spanning trees and Stein's method. {\it arXiv preprint.} %


\bibitem{chatterjeeshao} Chatterjee, S. and  Shao, Q.-M. (2011). Nonnormal approximation by Stein's method of exchangeable pairs with application to the Curie-Weiss model. {\it Ann. Appl. Probab.,} {\bf 21} no. 2, 464--483. %


\bibitem{chatterjeesound} Chatterjee, S. and Soundararajan, K. (2012). Random multiplicative functions in short intervals. {\it Internat. Math. Research Notices,} {\bf 2012} no. 3, 479--492. %

\bibitem{chen0}  Chen, L.~H.~Y. (1975). Poisson approximation for dependent trials. {\it Ann. Probab.,} {\bf 3} no. 3, 534--545. %

\bibitem{chen86} Chen, L.~H.~Y. (1986). The rate of convergence in a central limit theorem for dependent random variables with arbitrary index set. {\it IMA Preprint Series \#243,} Univ. Minnesota.

\bibitem{chen88}  Chen, L.~H.~Y. (1988). The central limit theorem and Poincar\'e-type inequalities. {\it Ann. Probab.,} {\bf 16} no. 1, 300--304.

\bibitem{chenfang} Chen, L.~H.~Y., Fang, X. and Shao, Q.-M. (2013). From Stein identities to moderate deviations. {\it Ann. Probab.,} {\bf 41} no. 1, 262--293.

\bibitem{chen} Chen, L.~H.~Y., Goldstein,  L. and Shao, Q.-M. (2011). {\it Normal approximation by Stein's method.} Springer, Heidelberg.%

\bibitem{chenrollin} Chen, L.~H.~Y. and  R\"ollin, A. (2010). Stein couplings for normal approximation. {\it arXiv preprint.} %


\bibitem{chen2} Chen, L.~H.~Y. and  Shao, Q.-M. (2004). Normal approximation under local dependence. {\it Ann. Probab.,} {\bf 32} no. 3A, 1985--2028. %

\bibitem{diaconis} Diaconis, P. (2004). Stein's method for Markov chains: first examples. In {\it Stein's method: expository lectures and applications,} 27--43, IMS Lecture Notes---Monograph Series, {\bf 46}.%


\bibitem{diaconisholmes} Diaconis, P. and Holmes, S., editors (2004). {\it Stein's method: expository lectures and applications.} IMS Lecture Notes---Monograph Series, {\bf 46}.%



\bibitem{eich} Eichelsbacher, P. and  L\"owe, M. (2010). Stein's method for dependent random variables occurring in statistical mechanics. {\it Electron. J. Probab.,} {\bf 15} no. 30, 962--988. %


\bibitem{erdos10a} Erd\H{o}s, L., P\'ech\'e, S., Ram\'irez, J.~A.,  Schlein, B. and  Yau, H.-T. (2010). Bulk universality for Wigner matrices. {\it Comm. Pure Appl. Math.,} {\bf 63} no. 7, 895--925. 


\bibitem{erdos10b} Erd\H{o}s, L., Ram\'irez, J.~A., Schlein, B. and  Yau, H.-T. (2010). Universality of sine-kernel for Wigner matrices with a small Gaussian perturbation. {\it Electron. J. Probab.,} {\bf 15} no. 18, 526--603.


\bibitem{erdos11}  Erd\H{o}s, L., Schlein, B. and Yau, H.-T. (2011).  Universality of random matrices and local relaxation flow. {\it Invent. Math.,} {\bf 185} no. 1, 75--119.


\bibitem{erdos12}  Erd\H{o}s, L. and Yau, H.-T. (2012). Universality of local spectral statistics of random matrices. {\it Bull. Amer. Math. Soc. (N.S.),} {\bf 49} no. 3, 377--414. 



\bibitem{fulman05} Fulman, J. (2005). Stein's method and Plancherel measure of the symmetric group. {\it Trans. Amer. Math. Soc.} {\bf 357} no. 2, 555--570.%


\bibitem{ghosh1} Ghosh, S. and Goldstein, L. (2011a). Applications of size biased couplings for concentration of measures. {\it Electr.  Commun. Probab.,} {\bf 16}, 70--83. %

\bibitem{ghosh2} Ghosh, S. and Goldstein, L. (2011b). Concentration of measures via size-biased couplings. {\it Probab. Theory Related Fields,} {\bf 149}, 271--278.%

\bibitem{goldstein1} Goldstein, L.  and Reinert, G.  (1997). Stein's method and the zero bias transformation with application to simple random sampling. {\it Ann. Appl. Probab.,} {\bf 7} no. 4, 935--952. %

\bibitem{goldstein2} Goldstein, L.  and Rinott, Y. (1996). Multivariate normal approximations by Stein's method and size bias couplings. {\it J. Appl. Probab.,} {\bf 33} no. 1, 1--17. %


\bibitem{gotze} G\"otze, F. (1991). On the rate of convergence in the multivariate CLT. {\it Ann. Probab.,} {\bf 19}, 724--739.


\bibitem{holmes}  Holmes, S. (2004). Stein's method for birth and death chains. In {\it Stein's method: expository lectures and applications,} 45--67, IMS Lecture Notes---Monogr. Ser., {\bf 46}, Inst. Math. Statist., Beachwood, OH.

\bibitem{kesten} Kesten, H. and Lee. S. (1996). The central limit
theorem for weighted minimal spanning trees on random points. \textit{Ann. Appl. Probab.,} \textbf{6} no. 2, 495-527.


\bibitem{lindeberg22} Lindeberg, J.~W. (1922). Eine neue herleitung des exponentialgesetzes in der wahrscheinlichkeitsrechnung. {\it Math. Zeitschr.,} {\bf 15},  211--225. %


\bibitem{luk} Luk, H.~M. (1994). {\it Stein's method for the gamma distribution and related statistical applications.} Ph.D.~thesis,
University of Southern California.%


\bibitem{mackey} Mackey, L., Jordan, M.~I., Chen, R.~Y., Farrell, B. and Tropp, J.~A. (2012). Matrix concentration inequalities via the method of exchangeable pairs. {\it arXiv preprint.}%

\bibitem{mann} Mann, B. (1997). Stein's method for $\chi^2$ of a multinomial. {\it Unpublished manuscript.}%

\bibitem{mossel} Mossel, E.,  O'Donnell, R. and Oleszkiewicz, K. (2010). Noise stability of functions with low influences: invariance and optimality. {\it Ann. of Math. (2),} {\bf 171} no. 1, 295--341. 


%\bibitem{naddaf} Naddaf, A. and Spencer, T. (1997). On homogenization and scaling limit of some gradient perturbations of a massless free field. {\it Comm. Math. Phys.,} {\bf 183} no. 1, 55--84.

\bibitem{nolen} Nolen, J. (2011). Normal approximation for a random elliptic equation. To appear in {\it Probab. Theory Related Fields.} %

\bibitem{nourdinpeccati} Nourdin, I. and Peccati, G. (2009). Stein's method on Wiener chaos. {\it Probab. Theory Related Fields,} {\bf 145} nos. 1-2, 75--118. %


\bibitem{npbook} Nourdin, I. and Peccati, G. (2012). {\it Normal Approximations with Malliavin Calculus: From Stein's Method to Universality.} Cambridge University Press.


\bibitem{nourdin} Nourdin, I., Peccati, G. and Reinert, G. (2009). Second order Poincar\'e inequalities and CLTs on Wiener space. {\it J. Funct. Anal.,} {\bf 257} no. 2, 593--609. %

\bibitem{pekoz1} Pek\"oz, E.~A. (1996). Stein's method for geometric approximation. {\it  J. Appl. Probab.,} {\bf 33} no. 3, 707--713.%

\bibitem{pekoz2} Pek\"oz, E.~A. and R\"ollin, A. (2011). New rates for exponential approximation and the theorems of R\'enyi and Yaglom. {\it Ann. Probab.,} {\bf 39} no. 2, 587--608.%


\bibitem{pekoz3} Pek\"oz, E.~A., R\"ollin, A. and  Ross, N. (2013). Total variation error bounds for geometric approximation. {\it Bernoulli,} {\bf 19} no. 2, 610--632.%


\bibitem{penrose}  Penrose, M.~D. (2003). {\it Random geometric graphs.}  Oxford University Press, Oxford.%


\bibitem{raic} Rai\v{c}, M.  (2007). 
CLT-related large deviation bounds based on Stein's method.
{\it Adv. Appl. Probab.,} {\bf 39} no. 3, 731--752. %


\bibitem{reinert0} Reinert, G. (2005). Three general approaches to Stein's method. In {\it An introduction to Stein's method,} volume 4 of {\it Lect. Notes Ser. Inst. Math. Sci. Natl. Univ. Singap.,} pages 183--221. Singapore Univ. Press, Singapore.%

\bibitem{reinert}  Reinert, G. and  R\"ollin, A. (2009).  Multivariate normal approximation with Stein's method of exchangeable pairs under a general linearity condition. {\it Ann. Probab.,} {\bf 37} no. 6, 2150--2173.%


\bibitem{reinert2}  Reinert, G. and  R\"ollin, A. (2010). Random subgraph counts and $U$-statistics: multivariate normal approximation via exchangeable pairs and embedding. {\it J. Appl. Probab.,} {\bf 47} no. 2, 378--393. %


\bibitem{rinottrotar96} Rinott, Y. and Rotar, V. (1996). A 
  multivariate CLT for local dependence with $n^{-1/2}\log n$ rate
  and applications to multivariate graph related statistics. {\it
    J. Multivariate Anal.} {\bf 56} no. 2, 333--350.%



\bibitem{rinott} Rinott, Y.  and Rotar, V.  (1997). On coupling constructions and rates in the CLT for dependent summands with applications to the antivoter model and weighted $U$-statistics. {\it Ann. Appl. Probab.,} {\bf 7} no. 4, 1080--1105. %

\bibitem{rr} Rinott, Y. and  Rotar, V. (2003). 
On Edgeworth expansions for dependency-neighborhoods chain structures and Stein's method. {\it 
Probab. Theory Related Fields,} {\bf 126} no. 4, 528--570. 


\bibitem{rollin} R\"ollin, A. (2008). A note on the exchangeability condition in Stein's method. {\it Statist. Probab. Lett.,} {\bf 78} no. 13, 1800--1806.




\bibitem{rollin2}  R\"ollin, A. (2013). Stein's method in high dimensions with applications. {\it Ann. Inst. Henri Poincar\'e (B):  Probab. Stat.,} {\bf 49} no. 2, 529--549. %

\bibitem{ross} Ross, N. (2011). Fundamentals of SteinÕs method. {\it Probab. Surv.,} {\bf 8}, 210--293. %


\bibitem{rotar79} Rotar, V. I. (1979). Limit theorems for polylinear forms. {\it J. Multivariate Anal.,} {\bf 9}, 511--530. 


\bibitem{steele}  Steele, J. M.~(1997). {\it Probability theory and combinatorial optimization.} SIAM, Philadelphia, PA. 


\bibitem{stein72} Stein, C. (1972). A bound for the error in the normal approximation to the distribution of a sum of dependent random variables.  {\it Proc. of the Sixth Berkeley Symp. on Math. Statist. and Probab., Vol. II: Probability theory,}  583--602. %

\bibitem{stein86} Stein, C. (1986). {\it Approximate computation of expectations.} IMS Lecture Notes---Monograph Series, {\bf 7}. %


\bibitem{steinetal}  Stein, C., Diaconis, P., Holmes, S. and Reinert, G. (2004). Use of exchangeable pairs in the analysis of simulations. In {\it Stein's method: expository lectures and applications,} 1--26, IMS Lecture Notes---Monograph Series, {\bf 46}.%

\bibitem{taovu10a} Tao, T. and Vu, V. (2010a). Random matrices: universality of ESDs and the circular law. With an appendix by Manjunath Krishnapur. {\it Ann. Probab.,} {\bf 38} no. 5, 2023--2065. 

\bibitem{taovu10b} Tao, T. and Vu, V. (2010b). Random matrices: universality of local eigenvalue statistics up to the edge. {\it Comm. Math. Phys.,} {\bf 298} no. 2, 549--572. 

\bibitem{taovu11}  Tao, T. and  Vu, V. (2011). Random matrices: universality of local eigenvalue statistics. {\it Acta Math.,} {\bf 206} no. 1, 127--204.




\bibitem{wastlund10}  W\"astlund, J. (2010). The mean field traveling salesman and related problems. {\it Acta Math.,} {\bf 204} no. 1, 91--150.


\bibitem{wastlund12}  W\"astlund, J. (2012). Replica symmetry of the minimum matching. {\it Ann. of Math. (2),} {\bf 175} no. 3, 1061--1091.

%\bibitem{BaRh} Babu{\v{s}}ka, I.,  Rheinboldt,  W. C., Error Estimates for Adaptive Finite Element Computations, \emph{SIAM J. Numer. Anal.}  \textbf{15} (1978), 736--754.

%\bibitem{FrQu}Freedman, M. H.,  Quinn, F., \emph{Topology of 4-manifolds}. Princeton Mathematical Series~39, Princeton University Press, Princeton, NJ, 1990.

\end{thebibliography}
\end{document}